\numberwithin{equation}{section}
\numberwithin{figure}{section}
\theoremstyle{plain}
\newtheorem{thm}{\protect\theoremname}[section]
  \theoremstyle{definition}
  \newtheorem{defn}[thm]{\protect\definitionname}
  \theoremstyle{remark}
  \newtheorem*{rem*}{\protect\remarkname}
  \theoremstyle{remark}
  \newtheorem{rem}[thm]{\protect\remarkname}
  \theoremstyle{definition}
  \newtheorem{example}[thm]{\protect\examplename}
  \theoremstyle{plain}
  \newtheorem{cor}[thm]{\protect\corollaryname}
  \theoremstyle{plain}
  \newtheorem{prop}[thm]{\protect\propositionname}
  \theoremstyle{plain}
  \newtheorem{lem}[thm]{\protect\lemmaname}
  \theoremstyle{definition}
  \newtheorem{condition}[thm]{\protect\conditionname}
\makeatletter \newcommand{\xyR}[1]{%
\makeatletter \xydef@\xymatrixrowsep@{#1} \makeatother }
\makeatletter \newcommand{\xyC}[1]{%
\makeatletter \xydef@\xymatrixcolsep@{#1} \makeatother }
\DeclareSymbolFont{rsfs}{U}{rsfs}{m}{n}
\DeclareSymbolFontAlphabet{\mathrf}{rsfs}
  \providecommand{\conditionname}{Condition}
  \providecommand{\corollaryname}{Corollary}
  \providecommand{\definitionname}{Definition}
  \providecommand{\examplename}{Example}
  \providecommand{\lemmaname}{Lemma}
  \providecommand{\propositionname}{Proposition}
  \providecommand{\remarkname}{Remark}
\providecommand{\theoremname}{Theorem}
\begin{document}

\title{$\mathfrak{S}$-coalgebras determine fundamental groups}

\author{Justin R. Smith}

\subjclass[2000]{Primary 18G55; Secondary 55U40}

\keywords{operads, cofree coalgebras}

\curraddr{Department of Mathematics\\
Drexel University\\
Philadelphia,~PA 19104}

\email{jsmith@drexel.edu}

\urladdr{http://vorpal.math.drexel.edu}

\maketitle
\global\long\def\ring{\mathbb{Z}}
\global\long\def\integers{\mathbb{Z}}
\global\long\def\betabar{\bar{\beta}}
 \global\long\def\desusp{\downarrow}
\global\long\def\susp{\uparrow}
\global\long\def\cobar{\mathcal{F}}
\global\long\def\coend{\mathrm{CoEnd}}
\global\long\def\ainfty{A_{\infty}}
\global\long\def\coassoc{\mathrm{Coassoc}}
\global\long\def\trm{\mathrm{T}}
\global\long\def\tfr{\mathfrak{T}}
\global\long\def\tabbr{\hat{\trm}}
\global\long\def\Tabbr{\hat{\tfr}}
\global\long\def\afr{\mathfrak{A}}
\global\long\def\homz{\mathrm{Hom}_{\ring}}
\global\long\def\zend{\mathrm{End}}
\global\long\def\rs#1{\mathrm{R}S_{#1 }}
\global\long\def\forgetful#1{\lceil#1\rceil}
\global\long\def\highprod#1{\bar{\mu}_{#1 }}
\global\long\def\slength#1{|#1 |}
\global\long\def\barcs{\bar{\mathcal{B}}}
\global\long\def\ubarcs{\mathcal{B}}
\global\long\def\zs#1{\ring S_{#1 }}
\global\long\def\homzs#1{\mathrm{Hom}_{\ring S_{#1 }}}
\global\long\def\zpi{\mathbb{Z}\pi}
\global\long\def\D{\mathfrak{D}}
\global\long\def\ahat{\hat{\mathfrak{A}}}
\global\long\def\cbar{{\bar{C}}}
\global\long\def\cf#1{\mathcal{C}(#1 )}
\global\long\def\ddelta{\dot{\Delta}}
\global\long\def\dimlimiter{\triangleright}
\global\long\def\coalgcat{\mathrf S_{0}}
\global\long\def\hcoalgcat{\mathrf{S}}
\global\long\def\ircoalgcat{\mathrf I_{0}}
\global\long\def\bircoalgcat{\mathrf{I}_{0}^{+}}
\global\long\def\hircoalgcat{\mathrf I}
\global\long\def\dcoalgcat{\mathrm{ind}-\coalgcat}
\global\long\def\chaincat{\mathbf{Ch}}
\global\long\def\coll{\mathrm{Coll}}
\global\long\def\bchaincat{\mathbf{Ch}_{0}}
\global\long\def\ilimit{\varprojlim\,}
\global\long\def\bigboxtimes{\mathop{\boxtimes}}
\global\long\def\dlimit{\varinjlim\,}
\global\long\def\coker{\mathrm{{coker}}}
\global\long\def\icoalgcat{\mathrm{pro}-\mathrf{S}_{0}}
\global\long\def\iircoalgcat{\mathrm{pro-}\ircoalgcat}
\global\long\def\dircoalgcat{\mathrm{ind-}\ircoalgcat}
\global\long\def\core#1{\left\langle #1\right\rangle }
\global\long\def\ilimitder{\varprojlim^{1}\,}
\global\long\def\pcoalg#1#2{P_{\mathcal{#1}}(#2) }
\global\long\def\pcoalgf#1#2{P_{\mathcal{#1}}(\forgetful{#2}) }
\global\long\def\coequalizer{\mathop{\mathrm{coequalizer}}}

\global\long\def\mainoperad{\mathcal{H}}
\global\long\def\cone#1{\mathrm{Cone}(#1)}

\global\long\def\im{\operatorname{im}}

\global\long\def\lcell{L_{\mathrm{cell}}}
\global\long\def\ccoalgcat{\mathrf S_{\mathrm{cell}}}

\global\long\def\fc#1{\mathrm{hom}(\bigstar,#1)}
\global\long\def\coS{\mathbf{coS}}
\global\long\def\cocell{\mathbf{co}\ccoalgcat}

\global\long\def\ccoalgcat{\mathrf S_{\mathrm{cell}}}

\global\long\def\spaces{\mathbf{SS}}

\global\long\def\pgam{\tilde{\Gamma}}
\global\long\def\pz{\tilde{\integers}}

\global\long\def\moore#1{\{#1\}}

\global\long\def\ints{\mathbb{Z}}

\global\long\def\finite{\mathcal{F}}

\global\long\def\finiteop{\finite^{\mathrm{op}}}

\global\long\def\syms{\mathbf{SS}}

\global\long\def\ordered{\mathbf{\Delta}}

\global\long\def\sets{\mathbf{Set}}

\global\long\def\colim{\operatorname{colim}}

\newdir{ >}{{}*!/-5pt/@{>}}

\global\long\def\treal#1{\mathcal{T}(\bigstar,#1)}

\global\long\def\rats{\mathbb{Q}}

\global\long\def\img{\operatorname{im}}

\global\long\def\tmap#1{\mathrm{T}_{#1}}

\global\long\def\Tmap#1{\mathfrak{T}_{#1}}

\global\long\def\glist#1#2#3{#1_{#2},\dots,#1_{#3}}

\global\long\def\blist#1#2{\glist{#1}1{#2}}

\global\long\def\enlist#1#2{\{\blist{#1}{#2}\}}

\global\long\def\tlist#1#2{\tmap{\blist{#1}{#2}}}

\global\long\def\Tlist#1#2{\Tmap{\blist{#1}{#2}}}

\global\long\def\nth#1{\mbox{#1}^{\mathrm{th}}}

\global\long\def\tunder#1#2{\tmap{\underbrace{{\scriptstyle #1}}_{#2}}}

\global\long\def\Tunder#1#2{\Tmap{\underbrace{{\scriptstyle #1}}_{#2}}}

\global\long\def\tunderi#1#2{\tunder{1,\dots,#1,\dots,1}{#2^{\mathrm{th}}\ \mathrm{position}}}

\global\long\def\Tunderi#1#2{\Tunder{1,\dots,#1,\dots,1}{#2^{\mathrm{th}}\,\mathrm{position}}}

\global\long\def\chaincat{\mathbf{Ch}}

\global\long\def\chaincatp{\chaincat_{0}}

\global\long\def\simpc{\mathbf{SC}}

\global\long\def\s{\mathfrak{S}}

\global\long\def\pco{P_{\s}}

\global\long\def\lco{L_{\s}}

\global\long\def\ns#1{\mathcal{N}^{#1}}

\global\long\def\cfn#1{\mathcal{N}(#1)}

\global\long\def\nfc#1{\operatorname{hom}_{\mathbf{n}}(\bigstar,#1)}

\date{\today}
\begin{abstract}
In this paper, we extend earlier work by showing that if $X$ and
$Y$ are simplicial complexes (i.e. simplicial sets whose nondegenerate
simplices are determined by their vertices), an \emph{isomorphism}
$\cfn X\cong\cfn Y$ of $\s$-coalgebras implies that the 3-skeleton
of $X$ is weakly equivalent to the 3-skeleton of $Y$, also implying
that $\pi_{1}(X)=\pi_{1}(Y)$.
\end{abstract}

\section{Introduction}

In \cite{Smith:1994}, the author defined the functor $\cf *$ on
simplicial sets --- essentially the chain complex equipped with the
structure of a coalgebra over an operad $\s$. This coalgebra structure
determined all Steenrod and other cohomology operations. Since these
coalgebras are not \emph{nilpotent}%
\footnote{In a nilpotent coalgebra, iterated coproducts of elements ``peter
out'' after a finite number of steps. See\foreignlanguage{american}{
\cite[chapter~3]{operad-book} for the precise definition.}%
}\emph{ }they have a kind of ``transcendental'' structure that contains
much more information.

In section~\ref{sec:The-functor-cfn}, we define a variant of the
$\cf *$-functor, named $\cfn *$. It is defined for simplicial complexes
--- semi-simplicial sets whose simplices are uniquely determined by
their vertices. The script-N emphasizes that its underlying chain-complex
is \emph{normalized} and $\cf *$ can be views as an extension of
$\cfn X$ to general simplicial sets (see \cite{smith-cellular}).

In \cite{smith-cellular}, we showed that if $X$ and $Y$ are pointed,
reduced simplicial sets, then a quasi-isomorphism $\cf X\to\cf Y$
induces one of their $\ints$-completions $\ints_{\infty}X\to\ints_{\infty}Y$.
It follows that that the $\cf *$-functor determine a \emph{nilpotent
}space's weak homotopy type.

In the present paper, we extend this by showing:

\medskip{}

Corollary. \ref{cor:cellular-determines-pi1}. \emph{If $X$ and $Y$
are simplicial complexes with the property that there exists an isomorphism
\[
g:\cfn X\to\cfn Y
\]
then their 3-skeleta are isomorphic and 
\[
\pi_{1}(X)\cong\pi_{1}(Y)
\]
}

\medskip{}

This implies that the functors $\cf *$ and $\cfn *$ encapsulate
``non-abelian'' information about a simplicial set --- such as its
(possibly non-nilpotent) fundamental group. The requirement that $g$
be an \emph{isomorphism} is stronger than needed for this (but \emph{quasi}-isomorphism
is not enough). 

The proof actually requires $X$ and $Y$ to be simplicial \emph{complexes}
rather than general simplicial sets. The author conjectures that the
$\cf *$-functor determines the integral homotopy type of an arbitrary
simplicial set.

Since the transcendental portion of $\cf X$ can be mapped to a power
series ring (see the proof of lemma~\ref{lem:diagonals-linearly-independent}),
the analysis of this data may require methods of analysis and algebraic
geometry.

\section{Definitions and assumptions}
\begin{defn}
\label{defr:chaincat}Let $\chaincat$ denote the category of $\ints$-graded
$\ints$-free chain complexes and let $\chaincatp\subset\chaincat$
denote the subcategory of chain complexes concentrated in positive
dimensions. 

If $c\in\chaincat$, 
\[
C^{\otimes n}=\underbrace{C\otimes_{\ints}\otimes\cdots\otimes_{\ints}C}_{n\,\text{times }}
\]

\end{defn}
We also have categories of spaces:
\begin{defn}
\label{def:simplicial-set-complex}Let $\spaces$ denote the category
of simplicial sets and $\simpc$ that of simplicial \emph{complexes.}
A simplicial \emph{complex} is a simplicial set without degeneracies
(i.e., a semi-simplicial set) with the property that simplices are
uniquely determined by their vertices.\end{defn}
\begin{rem*}
Following \cite{may-finite}, we can define a simplicial set to have
\emph{Property~A} if every face of a nondegenerate simplex is nondegenerate.
Theorem~12.4.4 of \cite{may-finite} proves that simplicial sets
with property~A have \emph{second subdivisions} that are simplicial
complexes. The bar-resolution $\rs 2$ is an example of a simplicial
set that does \emph{not} have property~A.

On the other hand, it is well-known that \emph{all} topological spaces
are weakly homotopy equivalent to simplicial complexes --- see, for
example, Theorem~2C.5 and Proposition~4.13 of \cite{hatcher-alg-top}.
\end{rem*}
We make extensive use of the Koszul Convention (see~\cite{Gugenheim:1960})
regarding signs in homological calculations:
\begin{defn}
\label{def:koszul-1} If $f:C_{1}\to D_{1}$, $g:C_{2}\to D_{2}$
are maps, and $a\otimes b\in C_{1}\otimes C_{2}$ (where $a$ is a
homogeneous element), then $(f\otimes g)(a\otimes b)$ is defined
to be $(-1)^{\deg(g)\cdot\deg(a)}f(a)\otimes g(b)$. \end{defn}
\begin{rem}
If $f_{i}$, $g_{i}$ are maps, it isn't hard to verify that the Koszul
convention implies that $(f_{1}\otimes g_{1})\circ(f_{2}\otimes g_{2})=(-1)^{\deg(f_{2})\cdot\deg(g_{1})}(f_{1}\circ f_{2}\otimes g_{1}\circ g_{2})$.\end{rem}
\begin{defn}
\label{def:homcomplex-1}Given chain-complexes $A,B\in\chaincat$
define
\[
\homz(A,B)
\]
to be the chain-complex of graded $\ring$-morphisms where the degree
of an element $x\in\homz(A,B)$ is its degree as a map and with differential
\[
\partial f=f\circ\partial_{A}-(-1)^{\deg f}\partial_{B}\circ f
\]
As a $\ring$-module $\homz(A,B)_{k}=\prod_{j}\homz(A_{j},B_{j+k})$.\end{defn}
\begin{rem*}
Given $A,B\in\mathbf{Ch}^{S_{n}}$, we can define $\homzs n(A,B)$
in a corresponding way.\end{rem*}
\begin{defn}
\label{def:tmap} Let $\alpha_{i}$, $i=1,\dots,n$ be a sequence
of nonnegative integers whose sum is $|\alpha|$. Define a set-mapping
 of symmetric groups 
\[
\tlist{\alpha}n:S_{n}\to S_{|\alpha|}
\]
 as follows:
\begin{enumerate}
\item for $i$ between 1 and $n$, let $L_{i}$ denote the length-$\alpha_{i}$
integer sequence: 
\item ,where $A_{i}=\sum_{j=1}^{i-1}\alpha_{j}$ --- so, for instance, the
concatenation of all of the $L_{i}$ is the sequence of integers from
1 to $|\alpha|$; 
\item $\tlist{\alpha}n(\sigma)$ is the permutation on the integers $1,\dots,|\alpha|$
that permutes the blocks $\{L_{i}\}$ via $\sigma$. In other words,
 $\sigma$ s the permutation 
\[
\left(\begin{array}{ccc}
1 & \dots & n\\
\sigma(1) & \dots & \sigma(n)
\end{array}\right)
\]
 then $\tlist{\alpha}n(\sigma)$ is the permutation defined by writing
\[
\left(\begin{array}{ccc}
L_{1} & \dots & L_{n}\\
L_{\sigma(1)} & \dots & L_{\sigma(n)}
\end{array}\right)
\]
 and regarding the upper and lower rows as sequences length $|\alpha|$. 
\end{enumerate}
\end{defn}
\begin{rem*}
Do not confuse the $T$-maps defined here with the transposition map
for tensor products of chain-complexes. We will use the special notation
$T_{i}$ to represent $T_{1,\dots,2,\dots,1}$, where the 2 occurs
in the $i^{\mathrm{th}}$ position. The two notations don't conflict
since the old notation is never used in the case when $n=1$. Here
is an example of the computation of $\tmap{2,1,3}((1,3,2))=\tmap{2,1,3}\left(\begin{array}{ccc}
1 & 2 & 3\\
3 & 1 & 2
\end{array}\right)$:$L_{1}=\{1\}2$, $L_{2}=\{3\}$, $L_{3}=\{4,5,6\}$. The permutation
maps the ordered set $\{1,2,3\}$ to $\{3,1,2\}$, so we carry out
the corresponding mapping of the sequences $\{L_{1},L_{2},L_{3}\}$
to get $\left(\begin{array}{ccc}
L_{1} & L_{2} & L_{3}\\
L_{3} & L_{1} & L_{2}
\end{array}\right)=\left(\begin{array}{ccc}
\{1,2\} & \{3\} & \{4,5,6\}\\
\{4,5,6\} & \{1,2\} & \{3\}
\end{array}\right)=\left(\begin{array}{cccccc}
1 & 2 & 3 & 4 & 5 & 6\\
4 & 5 & 6 & 1 & 2 & 3
\end{array}\right)$ (or $((1,4)(2,5)(3,6))$, in cycle notation).\end{rem*}
\begin{defn}
\label{def:operad} A sequence of differential graded $\mathbb{Z}$-free
modules, $\{\mathcal{V}_{i}\}$, will be said to form an \emph{operad}
if:
\begin{enumerate}
\item there exists a \emph{unit map} (defined by the commutative diagrams
below) 
\[
\eta:\mathbb{Z}\to\mathcal{V}_{1}
\]

\item for all $i>1$, $\mathcal{V}_{i}$ is equipped with a left action
of $S_{i}$, the symmetric group. 
\item for all $k\ge1$, and $i_{s}\ge0$ there are maps 
\[
\gamma:\mathcal{V}_{i_{1}}\otimes\cdots\otimes\mathcal{V}_{i_{k}}\otimes\mathcal{V}_{k}\to\mathcal{V}_{i}
\]
 where $i=\sum_{j=1}^{k}i_{j}$.

The $\gamma$-maps must satisfy the conditions:

\end{enumerate}
\end{defn}
\begin{description}
\item [{Associativity}] the following diagrams commute, where $\sum j_{t}=j$,
$\sum i_{s}=i$, and $g_{\alpha}=\sum_{\ell=1}^{\alpha}j_{\ell}$
and $h_{s}=\sum_{\beta=g_{s-1}+1}^{g_{s}}i_{\beta}$: \foreignlanguage{american}{
\[
\xyC{50pt}\xyR{15pt}\xymatrix{{\left(\bigotimes_{s=1}^{j}\mathcal{V}_{i_{s}}\right)\otimes\left(\bigotimes_{t=1}^{k}\mathcal{V}_{j_{t}}\right)\otimes\mathcal{V}_{k}}\ar[r]^{{\qquad\text{Id}\otimes\gamma}}\ar[dd]_{\text{shuffle}} & {\left(\bigotimes_{s=1}^{j}\mathcal{V}_{i_{s}}\right)\otimes\mathcal{V}_{j}}\ar[d]^{\gamma}\\
 & {\mathcal{V}_{i}}\\
{\left(\left(\bigotimes_{q=1}^{j_{t}}\mathcal{V}\right)\otimes\bigotimes_{t=1}^{k}\mathcal{V}_{j_{t}}\right)\otimes\mathcal{V}_{k}}\ar[r]_{{\qquad(\otimes_{t}\gamma)\otimes\text{Id}}} & {\left(\bigotimes_{t=1}^{k}\mathcal{V}_{h_{k}}\right)\otimes\mathcal{V}_{k}}\ar[u]_{\gamma}
}
\]
}
\item [{Units}] the diagrams  \[\begin{array}{cc}\xymatrix{{{\integers}^{k}\otimes\mathcal{V}_{k}}\ar[r]^{\cong}\ar[d]_{{\eta}^{k}\otimes\text{Id}}&{\mathcal{V}_{k}}\\
{{\mathcal{V}_{1}}^{k}\otimes{\mathcal{V}_{k}}}\ar[ur]_{\gamma}&}&\xymatrix{{\mathcal{V}_{k}\otimes\integers}\ar[r]^{\cong}\ar[d]_{\text{Id}\otimes\eta}&{\mathcal{V}_{k}}\\
{\mathcal{V}_{k}\otimes\mathcal{V}_{1}}\ar[ur]_{\gamma}&}\end{array}\] commute.
\item [{Equivariance}] the diagrams  \[\xymatrix@C+20pt{{\mathcal{V}_{j_{1}}\otimes\cdots\otimes\mathcal{V}_{j_{k}}\otimes\mathcal{V}_{k}}\ar[r]^-{\gamma}\ar[d]_{\sigma^{-1}\otimes\sigma}&{\mathcal{V}_{j}}\ar[d]^{\tmap{j_{1},\dots,j_{k}}(\sigma)}\\
{\mathcal{V}_{j_{\sigma(1)}}\otimes\cdots\otimes\mathcal{V}_{j_{\sigma(k)}}\otimes\mathcal{V}_{k}}\ar[r]_-{\gamma}&{\mathcal{V}_{j}}}\]commute, where $\sigma\in S_{k}$, and the $\sigma^{-1}$ on the left
permutes the factors $\{\mathcal{V}_{j_{i}}\}$ and the $\sigma$
on the right simply acts on $\mathcal{V}_{k}$. See \ref{def:tmap}
for a definition of $\tmap{j_{1},\dots,j_{k}}(\sigma)$. \[\xymatrix@C+20pt{{\mathcal{V}_{j_{1}}\otimes\cdots\otimes\mathcal{V}_{j_{k}}\otimes\mathcal{V}_{k}}\ar[r]^-{\gamma}\ar[d]_{\tau_{1}\otimes\cdots\tau_{k}\otimes\text{Id}}&{\mathcal{V}_{j}}\ar[d]^-{\tau_{1}\oplus\cdots\oplus\tau_{k}}\\
{\mathcal{V}_{j_{\sigma(1)}}\otimes\cdots\otimes\mathcal{V}_{j_{\sigma(k)}}\otimes\mathcal{V}_{k}}\ar[r]_-{\gamma}&{\mathcal{V}_{j}}}\] where $\tau_{s}\in S_{j_{s}}$ and $\tau_{1}\oplus\cdots\oplus\tau_{k}\in S_{j}$
is the block sum.\end{description}
\begin{rem*}
The alert reader will notice a discrepancy between our definition
of operad and that in \cite{Kriz-May} (on which it was based). The
difference is due to our using operads as parameters for systems of
\emph{maps}, rather than $n$-ary operations. We, consequently, compose
elements of an operad as one composes \emph{maps}, i.e. the second
operand is to the \emph{left} of the first. This is also why the symmetric
groups act on the \emph{left} rather than on the right. \end{rem*}
\begin{defn}
\label{def:unitaloperad}An operad, $\mathcal{V}$, will be called
\emph{unital} if $\mathcal{V}$ has a $0$-component $\mathcal{V}_{0}=\integers$,
concentrated in dimension $0$ and augmentations
\[
\epsilon_{n}:\mathcal{V}_{0}\otimes\cdots\otimes\mathcal{V}_{0}\otimes\mathcal{V}_{n}=\mathcal{V}_{n}\to\mathcal{V}_{0}=\integers
\]
 induced by their structure maps.\end{defn}
\begin{rem*}
The term ``unital operad'' is used in different ways by different
authors. We use it in the sense of Kriz and May in \cite{Kriz-May},
meaning the operad has a $0$-component that acts like an arity-lowering
augmentation under compositions. 
\end{rem*}
We will frequently want to think of operads in other terms:
\begin{defn}
\label{def:operadcomps} Let $\mathcal{V}$ be an operad, as defined
above. Given $i\le k_{1}>0$, define the $i^{\mathrm{th}}$ \emph{composition}

\[
\circ_{i}:\mathcal{V}_{k_{2}}\otimes\mathcal{V}_{k_{1}}\to\mathcal{V}_{k_{1}+k_{2}-1}
\]
 as the composite
\begin{multline*}
\underbrace{\integers\otimes\cdots\otimes\integers\otimes\mathcal{V}_{k_{2}}\otimes\integers\otimes\cdots\otimes\integers}_{\text{\ensuremath{i^{\text{th}}}factor}}\otimes\mathcal{V}_{k_{1}}\\
\to\underbrace{\mathcal{V}_{1}\otimes\cdots\otimes\mathcal{V}_{1}\otimes\mathcal{V}_{k_{2}}\otimes\mathcal{V}_{1}\otimes\cdots\otimes\mathcal{V}_{1}}_{\text{\ensuremath{i^{\text{th}}}factor}}\otimes\mathcal{V}_{k_{1}}\to\mathcal{V}_{k_{1}+k_{2}-1}
\end{multline*}
 where the final map on the right is $\gamma$. 

These compositions satisfy the following conditions, for all $a\in\mathscr{U}_{n}$,
$b\in\mathscr{U}_{m}$, and $c\in\mathscr{U}_{t}$:
\begin{description}
\item [{Associativity}] $(a\circ_{i}b)\circ_{j}c=a\circ_{i+j-1}(b\circ_{j}c)$
\item [{Commutativity}] $a\circ_{i+m-1}(b\circ_{j}c)=(-1)^{mn}b\circ_{j}(a\circ_{i}c)$ 
\item [{Equivariance}] $a\circ_{\sigma(i)}(\sigma\cdot b)=\tunderi ni(\sigma)\cdot(a\circ_{i}b)$ 
\end{description}
\end{defn}
\begin{rem*}
I am indebted to Jim Stasheff for pointing out to me that operads
were originally defined this way and called \emph{composition algebras.}
Given this definition of operad, we recover the $\gamma$ map in definition~\ref{def:operad}
by setting: 
\[
\gamma(u_{i_{1}}\otimes\cdots\otimes u_{i_{k}}\otimes u_{k})=u_{i_{1}}\circ_{1}\cdots\circ_{k-1}u_{i_{k}}\circ_{k}u_{k}
\]
 (where the implied parentheses associate to the right). It is left
to the reader to verify that the two definitions are equivalent (the
commutativity condition, here, is a special case of the equivariance
condition). Given a \emph{unital} operad, we can use the augmentation
maps to recover the composition operations.
\end{rem*}
A simple example of an operad is:
\begin{example}
\label{example:frakS0}For each $n\ge0$, $X$, the operad $\s_{0}$
has $\s_{0}(n)=\integers S_{n}$, concentrated in dimension $0$,
with structure-map induced by
\begin{eqnarray*}
\gamma_{\alpha_{1},\dots,\alpha_{n}}:S_{\alpha_{1}}\times\cdots\times S_{\alpha_{n}}\times S_{n} & \to & S_{\alpha_{1}+\cdots+\alpha_{n}}\\
\sigma_{\alpha_{1}}\times\cdots\times\sigma_{\alpha_{n}}\times\sigma_{n} & \mapsto & \tlist{\alpha}n(\sigma_{n})\circ(\sigma_{\alpha_{1}}\oplus\cdots\oplus\sigma_{\alpha_{n}})
\end{eqnarray*}
In other words, each of the $S_{\alpha_{i}}$ permutes elements within
the subsequence $\{\alpha_{1}+\cdots+\alpha_{i-1}+1,\dots,\alpha_{1}+\cdots+\alpha_{i}\}$
of the sequence $\{1,\dots,\alpha_{1}+\cdots+\alpha_{n}\}$ and $S_{n}$
permutes these $n$ blocks. 
\end{example}
For the purposes of this paper, the main example of an operad is
\begin{defn}
\label{def:coend}Given any $C\in\chaincat$, the associated \emph{coendomorphism
operad}, $\coend(C)$ is defined by
\[
\coend(C)(n)=\homz(C,C^{\otimes n})
\]
 Its structure map
\begin{multline*}
\gamma_{\alpha_{1},\dots,\alpha_{n}}:\homz(C,C^{\otimes n})\otimes\homz(C,C^{\otimes\alpha_{1}})\otimes\cdots\otimes\homz(C,C^{\otimes\alpha_{n}})\to\\
\homz(C,C^{\otimes\alpha_{1}+\cdots+\alpha_{n}})
\end{multline*}
simply composes a map in $\homz(C,C^{\otimes n})$ with maps of each
of the $n$ factors of $C$. 

This is a non-unital operad, but if $C\in\chaincat$ has an augmentation
map $\varepsilon:C\to\ring$ then we can  regard $\epsilon$ as the
only element of $\homz(C,C^{\otimes n})=\homz(C,C^{\otimes0})=\homz(C,\ring)$.
\end{defn}
Morphisms of operads are defined in the obvious way:
\begin{defn}
\label{def:operadmorphism} Given two operads $\mathcal{V}$ and $\mathcal{W}$,
a \emph{morphism} 
\[
f:\mathcal{V}\to\mathcal{W}
\]
 is a sequence of chain-maps 
\[
f_{i}:\mathcal{V}_{i}\to\mathcal{W}_{i}
\]
 commuting with all the diagrams in \ref{def:operad}.
\end{defn}
Verification that this satisfies the required identities is left to
the reader as an exercise.
\begin{defn}
\label{def:sfrakfirstmention}Let $\s$ denote the operad defined
in \cite{Smith:1994} --- where $\s_{n}=\rs n$ is the normalized
bar-resolution of $\integers$ over $\zs n$ for all $n>0$. This
is similar to the Barratt-Eccles operad defined in \cite{Barratt-Eccles-operad},
except that the latter is composed of \emph{unnormalized} bar-resolutions.
See \cite{Smith:1994} or appendix~A of \cite{smith-cellular}, for
the details. 

Appendix~A of \cite{smith-cellular} contains explicit computations
of some composition-operations in $\s$.
\end{defn}
Now we are ready to define the all-important concept of \emph{coalgebras}
over an operad:
\begin{defn}
\label{def:coalgebra-over-operad-1}A chain-complex $C$ is a \emph{coalgebra
over the operad} $\mathcal{V}$ if there exists a morphism of operads
\[
\mathcal{V}\to\coend(C)
\]
\end{defn}
\begin{rem*}
A coalgebra, $C$, over an operad, $\mathcal{V}$, is a sequence of
maps 
\[
f_{n}:\mathcal{V}_{n}\otimes C\to C^{\otimes n}
\]
 for all $n>0$, where $f_{n}$ is $\zs n$-equivariant and $S_{n}$
acts by permuting factors of $C^{\otimes n}$. The maps, $\{f_{n}\}$,
are related in the sense that they fit into commutative diagrams:
\begin{equation}
\xyC{20pt}\xymatrix{{\mathscr{\mathcal{V}}_{n}\otimes\mathscr{\mathcal{V}}_{m}\otimes C}\ar[r]^{\circ_{i}} & {\mathscr{\mathcal{V}}_{n+m-1}\otimes C}\ar[r]^{f_{n+m-1}} & {C^{\otimes n+m-1}}\\
{\mathscr{\mathcal{V}}_{n}\otimes\mathscr{\mathcal{V}}_{m}\otimes C}\ar[r]_{1\otimes f_{m}}\ar@{=}[u] & {\mathscr{\mathcal{V}}_{n}\otimes C^{\otimes m}}\ar[r]_{Z_{i-1}\qquad\quad} & {C^{i-1}\otimes\mathscr{\mathcal{V}}_{n}\otimes C\otimes C^{\otimes m-i}}\ar[u]_{1\otimes\dots\otimes f_{n}\otimes\dots\otimes1}
}
\label{dia:coalgebra-over-operad}
\end{equation}
 for all $n,m\ge1$ and $1\le i\le m$. Here $Z_{i-1}:\mathcal{V}_{n}\otimes C^{m}\to C^{\otimes i-1}\otimes\mathcal{V}_{n}\otimes C\otimes C^{\otimes m-i}$
is the map that shuffles the factor $\mathcal{V}_{n}$ to the right
of $i-1$ factors of $C$. In other words: The abstract composition-operations
in $\mathcal{V}$ exactly correspond to compositions of maps in $\{\homz(C,C^{\otimes n})\}$.
We exploit this behavior in applications of coalgebras over operads,
using an explicit knowledge of the algebraic structure of $\mathcal{V}$.
\end{rem*}
The structure of a coalgebra over an operad can also be described
in several equivalent ways:
\begin{enumerate}
\item $f_{n}:\mathcal{V}(n)\otimes C\to C^{\otimes n}$
\item $g:C\to\prod_{n=0}^{\infty}\homzs n(\mathcal{V}(n),C^{\otimes n})$
\end{enumerate}
where both satisfy identities that describe how composites of these
maps are compatible with the operad-structure.
\begin{defn}
\label{def:coalgebra-over-operad}A chain-complex $C$ is a \emph{coalgebra
over the operad} $\mathcal{V}$ if there exists a morphism of operads
\[
\mathcal{V}\to\coend(C)
\]
\end{defn}
\begin{rem*}
The structure of a coalgebra over an operad can be described in several
equivalent ways:
\begin{enumerate}
\item $f_{n}:\mathcal{V}(n)\otimes C\to C^{\otimes n}$
\item $g:C\to\prod_{n=0}^{\infty}\homzs n(\mathcal{V}(n),C^{\otimes n})$
\end{enumerate}
\end{rem*}
where both satisfy identities that describe how composites of these
maps are compatible with the operad-structure.
\begin{defn}
\label{def:s-coalgebra-morphism}Using the second description,
\[
\alpha_{C}:C\to\prod_{n=1}^{\infty}\homzs n(\rs n,C^{\otimes n})
\]
an $\s$-coalgebra morphism 
\[
f:C\to D
\]
is a chain-map that makes the diagram
\[
\xymatrix{{\forgetful C}\ar[d]_{\forgetful f}\ar[r]^{\alpha_{C}\qquad\qquad\quad} & {\prod_{n=1}^{\infty}\homzs n(\rs n,\forgetful C^{\otimes n})}\ar[d]^{\prod_{n=1}^{\infty}\homzs n(1,\forgetful f^{\otimes n})}\\
{\forgetful D}\ar[r]_{\alpha_{D}\qquad\qquad\quad} & {\prod_{n=1}^{\infty}\homzs n(\rs n,\forgetful D^{\otimes n})}
}
\]
commute, where $\forgetful *$ is the forgetful functor that turns
a coalgebra into a chain-complex.

We also need
\end{defn}

\section{morphisms of $\s$-coalgebras\label{sec:morphisms}}

Proposition~\ref{pro:simplicespropertyS} proves that if $e_{n}=\underbrace{[(1,2)|\cdots|(1,2)]}_{n\text{ terms}}\in\rs 2$
and $x\in\cfn X$ is the image of a $k$-simplex, then
\[
f_{2}(e_{k}\otimes x)=\xi_{k}\cdot x\otimes x
\]
where $\xi_{k}=(-1)^{k(k-1)/2}$.
\begin{defn}
\label{def:gamma-m-map}If \foreignlanguage{american}{$k,m$ are positive
integers, $C$ is a chain-complex, and $F_{2,m}=e_{m}$ and $F_{k,m}=\underbrace{e_{m}\circ_{1}\cdots\circ_{1}e_{m}}_{k-1\text{ iterations}}\in RS_{k}$
--- compositions in the operad $\s$ --- set 
\[
\rho_{m}=(\xi_{m}\cdot E_{2,m},\xi_{m}^{2}\cdot E_{3,m},\xi_{m}^{3}\cdot E_{4,m},\dots)\in\prod_{n=2}^{\infty}RS_{n}
\]
with $\xi_{m}=(-1)^{m(m-1)/2}$ and define 
\[
\gamma_{m}:\prod_{n=2}^{\infty}\homzs n(\rs n,C_{m}^{\otimes n})\to\prod_{n=2}^{\infty}C^{\otimes n}
\]
via evaluation on $\rho_{m}$.}
\end{defn}
We have
\begin{cor}
\label{cor:simplex-image}If $X$ is a simplicial set and $c\in\cf X$
is an element generated by an $n$-simplex, then the image of $c$
under the composite
\[
\cfn X_{n}\xrightarrow{\alpha_{\cf X}}\prod_{k=1}^{\infty}\homzs k(\rs k,\cfn X^{\otimes k})\xrightarrow{\gamma_{n}}\prod_{k=1}^{\infty}\cfn X^{\otimes k}
\]
 is 
\[
e(c)=(c,c\otimes c,\dots)
\]
\end{cor}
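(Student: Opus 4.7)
The plan is to compute $\gamma_n(\alpha_{\cf X}(c))$ factor by factor. By the definition of $\gamma_n$ and $\rho_n$, its component in $\cfn X^{\otimes k}$ for $k\ge 2$ equals
\[
\xi_n^{k-1}\, f_k(F_{k,n}\otimes c),
\]
where $f_k\colon\rs k\otimes \cfn X\to \cfn X^{\otimes k}$ is the structure map of the $\s$-coalgebra $\cfn X$. The $k=1$ entry is handled by the operad-unit axiom of Definition~\ref{def:operad}: evaluating $\alpha_{\cf X}(c)$ at the generator of $\rs 1=\integers$ returns $c$ itself, giving the leading $c$ in the claimed tuple.

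The core claim to establish is the identity
\[
f_k(F_{k,n}\otimes c) = \xi_n^{k-1}\, c^{\otimes k},
\]
proved by induction on $k$. The base case $k=2$ is precisely Proposition~\ref{pro:simplicespropertyS}, which gives $f_2(e_n\otimes c)=\xi_n\,c\otimes c$. For the inductive step, I would use the factorization $F_{k+1,n}=e_n\circ_1 F_{k,n}$, which is unambiguous thanks to the associativity axiom $(a\circ_1 b)\circ_1 c = a\circ_1(b\circ_1 c)$ from Definition~\ref{def:operadcomps}. Instantiating the coalgebra compatibility diagram~\eqref{dia:coalgebra-over-operad} with $n=2$, $m=k$, $i=1$ then yields
\[
f_{k+1}\bigl((e_n\circ_1 F_{k,n})\otimes c\bigr) = (f_2\otimes \mathrm{id}^{\otimes(k-1)})\circ Z_0\circ (1\otimes f_k)(e_n\otimes F_{k,n}\otimes c).
\]
Applying the inductive hypothesis to the inner $f_k$ and observing that $Z_0$ is merely a regrouping of tensor factors, the right-hand side collapses to $\xi_n^{k-1}\,f_2(e_n\otimes c)\otimes c^{\otimes(k-1)} = \xi_n^{k}\,c^{\otimes(k+1)}$, completing the induction.

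Multiplying by the coefficient $\xi_n^{k-1}$ that appears in $\rho_n$ yields $\xi_n^{2(k-1)}\,c^{\otimes k}=c^{\otimes k}$, since $\xi_n=\pm 1$. Combined with the $k=1$ entry, this gives exactly $e(c)=(c,c\otimes c,c^{\otimes 3},\dots)$ as claimed. The principal point that requires care is the inductive step: one must correctly identify the outermost $\circ_1$ decomposition of $F_{k+1,n}$ and verify that no stray Koszul signs appear when applying $(1\otimes f_k)$ and $(f_2\otimes \mathrm{id}^{\otimes(k-1)})$. The absence of such signs is automatic because each $f_k$ is a degree-zero chain map, so the only signs in play are the powers of $\xi_n$, which are precisely those packaged into $\rho_n$ and cancelled by $\xi_n^2=1$.
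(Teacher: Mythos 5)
Your proof is correct and follows essentially the same route as the paper, whose own argument is just the one-line observation that Proposition~\ref{pro:simplicespropertyS} gives the $k=2$ case and that operad-compositions map to compositions of coproducts; your induction on $k$ via $F_{k+1,n}=e_n\circ_1 F_{k,n}$ and diagram~\eqref{dia:coalgebra-over-operad} is exactly the detailed unpacking of that remark. The sign bookkeeping (degree-zero structure maps, $\xi_n^{2(k-1)}=1$) is also handled correctly.
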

\begin{proof}
This follows immediately from proposition~\ref{pro:simplicespropertyS}
and the fact that operad-compositions map to compositions of coproducts.
\end{proof}
Lemma~\ref{lem:diagonals-linearly-independent} implies that
\begin{cor}
\label{cor:n-simplices-map-to-simplices}Let $X$ be a simplicial
set and suppose 
\[
f:\ns n=\cfn{\Delta^{n}}\to\cfn X
\]
 is a $\s$-coalgebra morphism. Then the image of the generator $\Delta^{n}\in\cfn{\Delta^{n}}$
is a generator of $\cfn X$ defined by an $n$-simplex of $X$.\end{cor}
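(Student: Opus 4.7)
The plan is to combine Corollary~\ref{cor:simplex-image} with the naturality encoded in the definition of an $\s$-coalgebra morphism, and then compare coefficients in tensor powers. First I would apply Corollary~\ref{cor:simplex-image} to the top-dimensional simplex $\Delta^n \in \cfn{\Delta^n}$, yielding
\[
(\gamma_n \circ \alpha_{\cfn{\Delta^n}})(\Delta^n) = \bigl(\Delta^n,\, \Delta^n \otimes \Delta^n,\, \Delta^n \otimes \Delta^n \otimes \Delta^n,\, \dots\bigr).
\]
Because $f$ is an $\s$-coalgebra morphism, the diagram in Definition~\ref{def:s-coalgebra-morphism} commutes, and $\gamma_n$ is natural with respect to chain maps in its coalgebra argument (it is simply evaluation on the fixed operad element $\rho_n$). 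Combining these two naturalities and chasing $\Delta^n$ around the resulting diagram gives
\[
(\gamma_n \circ \alpha_{\cfn X})(f(\Delta^n)) = \bigl(f(\Delta^n),\, f(\Delta^n)^{\otimes 2},\, f(\Delta^n)^{\otimes 3},\, \dots\bigr).
\]

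Next I would expand $f(\Delta^n) = \sum_i a_i c_i$ over the $\integers$-basis of distinct nondegenerate $n$-simplices $c_i$ of $X$; that this really is a basis uses the simplicial-complex hypothesis on $X$. Since $\gamma_n \circ \alpha_{\cfn X}$ is $\integers$-linear, applying Corollary~\ref{cor:simplex-image} to each $c_i$ separately shows the same expression equals $\sum_i a_i (c_i,\, c_i \otimes c_i,\, c_i^{\otimes 3},\, \dots)$. Equating the $k=2$ coordinate with the expansion of $f(\Delta^n)^{\otimes 2}$ gives
\[
\sum_{i,j} a_i a_j\, c_i \otimes c_j = \sum_i a_i\, c_i \otimes c_i.
\]
Since the $c_i \otimes c_j$ are linearly independent in $\cfn X \otimes \cfn X$, coefficient comparison yields $a_i^2 = a_i$ together with $a_i a_j = 0$ for all $i \neq j$. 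Hence at most one $a_i$ is nonzero and that $a_i$ equals $1$, so $f(\Delta^n)$ is either $0$ or a single $n$-simplex of $X$; the former is ruled out by the statement of the corollary, giving the conclusion.

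The main obstacle I anticipate is the bookkeeping in the naturality step: one must carefully verify that the product $\prod_k \homzs{k}(1, f^{\otimes k})$ intertwines $\alpha_{\cfn{\Delta^n}}$ with $\alpha_{\cfn X}$ and that $\gamma_n$ is compatible with this intertwining, so that the full composite $\gamma_n \circ \alpha$ is strictly natural under $f$. Once this is in hand, the rest is the elementary observation that the only $\integers$-valued idempotent function concentrated on a finite set is a single indicator. I expect Lemma~\ref{lem:diagonals-linearly-independent} to supply, in the form required here, the statement that the elements $e(c)$ attached to distinct simplices are linearly independent — which is exactly the ingredient needed to justify the coefficient comparison above.
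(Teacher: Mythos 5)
Your proposal is correct, and its first half---applying Corollary~\ref{cor:simplex-image} to $\Delta^{n}$, using the fact that $\gamma_{n}$ is evaluation on the fixed element $\rho_{n}$ (hence commutes with $\prod_{k}\homzs k(1,f^{\otimes k})$), and expanding $f(\Delta^{n})=\sum_{i}a_{i}c_{i}$ over the nondegenerate $n$-simplices---is exactly the setup the paper uses. Where you diverge is the punchline. The paper feeds the two expressions for the full sequence $(f(\Delta^{n}),f(\Delta^{n})^{\otimes2},\dots)$ into Lemma~\ref{lem:diagonals-linearly-independent}, whose Vandermonde argument over $\rats$ shows that $e(f(\Delta^{n}))$ cannot be a linear combination of the $e(c_{i})$ unless $f(\Delta^{n})$ coincides with one of the $c_{i}$. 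You instead read off only the arity-$2$ coordinate and get $\sum_{i,j}a_{i}a_{j}\,c_{i}\otimes c_{j}=\sum_{i}a_{i}\,c_{i}\otimes c_{i}$, whence $a_{i}^{2}=a_{i}$ and $a_{i}a_{j}=0$ for $i\ne j$; this needs only the linear independence of the $c_{i}\otimes c_{j}$ in $\cfn X^{\otimes2}$ (immediate from $\ints$-freeness), not Lemma~\ref{lem:diagonals-linearly-independent} at all---so your closing remark that the lemma is ``exactly the ingredient needed'' is not accurate, and in fact your route is the more elementary one: it stays over $\ints$ and avoids the determinant computation entirely. Two small caveats: the simplicial-complex hypothesis is not what makes the nondegenerate $n$-simplices a basis of the normalized complex (that is automatic), and your disposal of the case $f(\Delta^{n})=0$ by appeal to ``the statement of the corollary'' is circular---though the paper's own proof leaves that case equally unaddressed, and it does not arise in the applications, where $f$ is induced by an inclusion or is an isomorphism.
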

\begin{proof}
Suppose 
\[
f(\Delta^{n})=\sum_{k=1}^{t}c_{k}\cdot\sigma_{k}^{n}\in\cfn X
\]
where the $\sigma_{k}^{n}$ are images of $n$-simplices of $X$.
If $f(\Delta^{n})$ is not equal to one of the $\sigma_{k}^{n}$,
lemma~\ref{lem:diagonals-linearly-independent} implies that its
image is linearly independent of the $\sigma_{k}^{n}$, a \emph{contradiction.}
The statement about sub-simplices follows from the main statement.
\end{proof}
We also conclude that:
\begin{cor}
\label{cor:automorphisms-trivial}If $f:\cfn{\Delta^{n}}\to\cfn{\Delta^{n}}$
is 
\begin{enumerate}
\item an isomorphism of $\s$-algebras in dimension $n$ and 
\item an endomorphism in lower dimensions 
\end{enumerate}
then $f$ must be an isomorphism. If $n\le3$, then $f$ must also
be the identity map.\end{cor}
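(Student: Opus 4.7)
The plan is to use Corollary \ref{cor:n-simplices-map-to-simplices} to reduce $f$ to a combinatorial action on simplices, exploit the Alexander--Whitney coproduct (the arity-zero part of the $\s$-coalgebra structure) to pin down $f$ on many sub-simplices, and finish with the chain-map identity $\partial f=f\partial$. First, for every sub-simplex $\sigma\subset\Delta^{n}$, the inclusion $\iota_{\sigma}\colon\cfn{\sigma}\hookrightarrow\cfn{\Delta^{n}}$ is natural, hence an $\s$-coalgebra morphism, and so is the composite $f\circ\iota_{\sigma}$. Corollary \ref{cor:n-simplices-map-to-simplices} then forces $f(\sigma)=(f\circ\iota_{\sigma})(\Delta^{\dim\sigma})$ to be a single simplex of $\Delta^{n}$ of the same dimension. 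Since $\Delta^{n}$ contains a unique top simplex, $f(\Delta^{n})=\Delta^{n}$, and $f$ induces a set map $\phi_{k}$ on the $k$-simplices of $\Delta^{n}$ for every $k$, with $\phi_{n}=\mathrm{id}$.

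Next, applying $\s$-coalgebra compatibility $\Delta\circ f=f^{\otimes 2}\circ\Delta$ to the Alexander--Whitney coproduct $\Delta(\Delta^{n})=\sum_{i=0}^{n}[v_{0},\dots,v_{i}]\otimes[v_{i},\dots,v_{n}]$ and matching bi-degrees yields
\[
\sum_{i} f([v_{0},\dots,v_{i}])\otimes f([v_{i},\dots,v_{n}]) = \sum_{i}[v_{0},\dots,v_{i}]\otimes[v_{i},\dots,v_{n}];
\]
since each factor is a single basis simplex by the previous step, $f([v_{0},\dots,v_{i}])=[v_{0},\dots,v_{i}]$ and $f([v_{i},\dots,v_{n}])=[v_{i},\dots,v_{n}]$ for each $i$. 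Iterating on these front and back faces, $f$ fixes every contiguous sub-simplex $[v_{i},\dots,v_{j}]$ of $\Delta^{n}$. For the isomorphism assertion I would argue by downward induction on $k$ that $\phi_{k}$ is a bijection: given $\phi_{k+1}$ bijective, applying $\partial f(\sigma)=f(\partial\sigma)$ on a $(k+1)$-simplex $\sigma$ forces $\phi_{k}$ to be injective on the face set $\{d_{0}\sigma,\dots,d_{k+1}\sigma\}$, since a coincidence $\phi_{k}(d_{i}\sigma)=\phi_{k}(d_{j}\sigma)$ with $i\ne j$ would produce a coefficient of $0$ or $\pm2$ on the right-hand side, while the boundary of $\phi_{k+1}(\sigma)$ has only $\pm1$ coefficients. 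Combining this local injectivity with the AW-fixing of contiguous faces yields the global bijection.

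Finally, for $n\le 3$ the identity claim is combinatorial. For $n\le 1$ every sub-face is contiguous, so $f=\mathrm{id}$ already. For $n=2$ the only non-contiguous sub-face is $[v_{0}v_{2}]$, determined by $\partial f(\Delta^{2})=f(\partial\Delta^{2})$. For $n=3$ the non-contiguous sub-faces are $[v_{0}v_{2}]$, $[v_{0}v_{3}]$, $[v_{1}v_{3}]$, $[v_{0}v_{1}v_{3}]$, $[v_{0}v_{2}v_{3}]$; substituting the known contiguous values into $\partial f(\Delta^{3})=f(\partial\Delta^{3})$ reduces it to
\[
f([v_{0}v_{1}v_{3}])-f([v_{0}v_{2}v_{3}]) = [v_{0}v_{1}v_{3}]-[v_{0}v_{2}v_{3}],
\]
and since each unknown is a single basis $2$-simplex, both are forced to the identity; the boundaries of these two $2$-faces then fix $f$ on the three non-contiguous edges. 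The main obstacle is organizing this cascade so that each successive boundary equation is essentially one-unknown after prior substitutions; for $n\le 3$ the combinatorics of $\Delta^{n}$ are simple enough for this to happen directly, which, together with the fact that the $\pi_{1}$ application needs only the $3$-skeleton, motivates the restriction in the statement.
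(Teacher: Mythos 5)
Your argument is correct, and the first half (that $f$ is an isomorphism) is essentially the paper's own proof: downward induction on dimension, using corollary~\ref{cor:n-simplices-map-to-simplices} to see that each sub-simplex goes to a single simplex with coefficient $+1$, and then the chain-map identity $\partial f=f\partial$ plus the distinctness of the $k+1$ faces to force a bijection on face-sets at each level (your ``globalization'' step is stated a bit loosely, but surjectivity in dimension $k-1$ follows from surjectivity in dimension $k$, and injectivity then comes for free on a finite set --- the same level of detail as the paper). Where you genuinely diverge is the identity claim for $n\le3$. The paper handles $n=3$ by invoking the higher coproduct $f_{2}([(1,2)]\otimes\Delta^{3})$ of proposition~\ref{prop:e1timesdelta3}, whose terms $-F_{2}\Delta^{3}\otimes F_{0}\Delta^{3}$ and $-F_{1}\Delta^{3}\otimes F_{3}\Delta^{3}$ kill the two candidate face-swaps; indeed the paper asserts that the ordinary coproduct ``does not rule any of these actions out.'' You instead observe that the Alexander--Whitney coproduct $\sum_{i}[v_{0},\dots,v_{i}]\otimes[v_{i},\dots,v_{n}]$, matched bidegree by bidegree and iterated, already pins down every \emph{contiguous} face $[v_{i},\dots,v_{j}]$; in particular it fixes $F_{0}\Delta^{3}=[v_{1}v_{2}v_{3}]$ and $F_{3}\Delta^{3}=[v_{0}v_{1}v_{2}]$, after which $\partial f(\Delta^{3})=f(\partial\Delta^{3})$ reduces to $f([v_{0}v_{1}v_{3}])-f([v_{0}v_{2}v_{3}])=[v_{0}v_{1}v_{3}]-[v_{0}v_{2}v_{3}]$, and since each unknown is a single basis simplex this forces the identity; the remaining non-contiguous edges then fall to the boundaries of these $2$-faces. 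I checked this cascade and it closes: both candidate swaps move a face ($F_{0}$ or $F_{3}$) that occurs as a single-face factor in the AW coproduct, so the paper's pessimism about the ordinary coproduct is unwarranted and your route avoids proposition~\ref{prop:e1timesdelta3} entirely. What you lose is the structural hint the paper is emphasizing --- that extending the corollary beyond $n=3$ is expected to require ever higher coproducts --- but as a proof of the stated result for $n\le3$ your version is more elementary and, I think, cleaner.
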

\begin{rem*}
The final statement actually works for some larger values of $n$,
but the arguments become vastly more complicated (requiring the use
of higher coproducts). It would have extraordinary implications if
it were true for \emph{all} $n$.\end{rem*}
\begin{proof}
We first show that $f$ must be an isomorphism. We are given that
$f$ is an isomorphism in dimension $n$. We use downward induction
on dimension to show that it is an isomorphism in lower dimensions:

Suppose $f$ is an isomorphism in dimension $k$ and $\Delta^{k}\subset\Delta^{n}$
is a simplex. The boundary of $\Delta^{k}$ is a linear combination
of $k+1$ distinct faces which corollary\foreignlanguage{american}{~\ref{cor:n-simplices-map-to-simplices}
implies must map to $k-1$-dimensional simplices with the \emph{same}
coefficients (of $\pm1$). The Pigeonhole Principal and the fact that
$f$ is a \emph{chain-map} imply that all of the $k+1$ distinct faces
of $f(\Delta^{k})$ must be in the image of $f$ so that $f$ induces
a 1-1 correspondence between faces of $\Delta^{k}$ and those of $f(\Delta^{k})$.
It follows that $f|\cfn{\Delta^{k}}$ is an isomorphism in dimension
$k-1$. Since $\Delta^{k}$ was arbitrary, it follows that $f$ is
an isomorphism in dimension $k-1$.}

It follows that $f$ is actually an \emph{automorphism} of $\cfn{\Delta^{n}}$.
Now we assume that $n\le3$ and show that $f$ is the \emph{identity
map:}

If $n=1$ then corollary~\ref{cor:n-simplices-map-to-simplices}
implies that $f|\cfn{\Delta^{1}}_{1}=1$. Since the $0$-simplices
must map to $0$-simplices (by corollary~\ref{cor:n-simplices-map-to-simplices})
with a $+1$ sign it follows that the only possible non-identity automorphism
of $\cfn{\Delta^{1}}$ swaps the ends of $\Delta^{1}$ --- but this
would violate the condition that $f$ is a chain-map.

In dimension 2, let $\Delta^{2}$ be a $2$-simplex. Similar reasoning
to that used in the one-dimensional case implies that a non-identity
automorphism of $\cfn{\Delta^{2}}$ would (at most) involve permuting
some of its faces. Since
\[
\partial\Delta^{2}=F_{0}\Delta^{2}-F_{1}\Delta^{2}+F_{2}\Delta^{2}
\]
the only non-identity permutation compatible with the boundary map
swaps $F_{0}\Delta^{2}$ and $F_{2}\Delta^{2}$. But the coproduct
of $\Delta^{2}$ is given by
\[
[\,]_{2}\otimes\Delta^{2}\mapsto\Delta^{2}\otimes F_{0}F_{1}\Delta^{2}+F_{2}\Delta^{2}\otimes F_{0}\Delta^{2}+F_{1}F_{2}\Delta^{2}\otimes\Delta^{2}
\]
(see proposition~\ref{prop:e1timesdelta2}) where $[\,]$ is the
0-dimensional generator of $\rs 2$ --- the bar-resolution of $\ints$
over $\zs{_{2}}$. It follows that swapping $F_{0}\Delta^{2}$ and
$F_{2}\Delta^{2}$ would violate the condition that $f$ must preserve
coproducts. The case where $n=1$ implies that the vertices cannot
be permuted.

When $n=3$, we have 
\[
\partial\Delta^{3}=F_{0}\Delta^{3}-F_{1}\Delta^{3}+F_{2}\Delta^{3}-F_{3}\Delta^{3}
\]
so, in principal, we might be able to swap $F_{0}\Delta^{3}$ and
$F_{2}\Delta^{3}$ or $F_{1}\Delta^{3}$ and $ $$F_{3}\Delta^{3}$.
The coproduct does not rule any of these actions out since it involves
multiple face-operations. The first ``higher coproduct'' does, however
--- see \ref{prop:e1timesdelta3}:\foreignlanguage{english}{
\begin{align}
f_{2}([(1,2)]\otimes\Delta^{3}) & =F_{1}F_{2}\Delta^{3}\otimes\Delta^{3}-F_{2}\Delta^{3}\otimes F_{0}\Delta^{3}\nonumber \\
 & +\Delta^{3}\otimes F_{0}F_{1}\Delta^{3}-\Delta^{3}\otimes F_{0}F_{3}\Delta^{3}\nonumber \\
 & -F_{1}\Delta^{3}\otimes F_{3}\Delta^{3}-\Delta^{3}\otimes F_{2}F_{3}\Delta^{3}\label{eq:e1timesdelta3-1}
\end{align}
The two terms with two-dimensional factors are $-F_{2}\Delta^{3}\otimes F_{0}\Delta^{3}$
and $-F_{1}\Delta^{3}\otimes F_{3}\Delta^{3}$ and these would be
altered by the permutation mentioned above. It follows that the only
automorphism of $\cfn{\Delta^{3}}$ is the identity map. The lower-dimensional
cases imply that the 1-simplices and vertices cannot be permuted either.}
\end{proof}
A similar line of reasoning implies that:
\begin{cor}
\label{cor:cf-gives-simplices}Let $X$ be a simplicial complex and
let 
\[
f:\cfn{\Delta^{n}}\to\cfn X
\]
map $\Delta^{n}$ to a simplex $\sigma\in\cf X$ defined by the inclusion
$\iota:\Delta^{n}\to X$. Then
\[
f(\cfn{\Delta^{n}})\subset\cfn{\iota}(\cfn{\Delta^{n}})
\]
so that $f=\alpha\circ\cfn{\iota}$, where $\alpha:\cfn{\Delta^{n}}\to\cfn{\Delta^{n}}$
is an automorphism. If $n\le3$, then $f=\cfn{\iota}$.\end{cor}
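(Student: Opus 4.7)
The plan is to imitate the strategy of Corollary~\ref{cor:automorphisms-trivial}, with the inclusion $\iota$ playing the role formerly played by the identity of $\Delta^{n}$. First I would use downward induction on $k$ to show that every $k$-dimensional subsimplex $\tau\subset\Delta^{n}$ satisfies $f(\tau)\in\cfn{\iota}(\cfn{\Delta^{n}}_{k})$. The base case $k=n$ is the hypothesis, since $f(\Delta^{n})=\sigma=\cfn{\iota}(\Delta^{n})$. For the inductive step, any $(k-1)$-subsimplex $\tau'$ is a face of some $k$-subsimplex $\tau$; by the inductive hypothesis, $f(\tau)=\pm\iota(\mu)$ for some $k$-subsimplex $\mu\subset\Delta^{n}$, and Corollary~\ref{cor:n-simplices-map-to-simplices} guarantees that each $f(F_{i}\tau)$ is a single $(k-1)$-simplex of $X$ (up to sign). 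The chain-map identity $\partial f(\tau)=f(\partial\tau)$ then equates $\sum_{j}\pm\iota(F_{j}\mu)$ with $\sum_{i}(-1)^{i}f(F_{i}\tau)$. Because $X$ is a simplicial complex, the faces $\iota(F_{j}\mu)$ are pairwise distinct simplices, so the Pigeonhole argument used in Corollary~\ref{cor:automorphisms-trivial} forces $\{f(F_{i}\tau)\}$ to be a signed permutation of $\{\iota(F_{j}\mu)\}$. In particular $f(\tau')$ lies in $\cfn{\iota}(\cfn{\Delta^{n}})$, completing the induction.

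Next, because $X$ is a simplicial complex, the inclusion $\iota$ is injective on simplices, so $\cfn{\iota}$ is a monomorphism of chain complexes. Combined with the first paragraph, this yields a unique chain map $\alpha\colon\cfn{\Delta^{n}}\to\cfn{\Delta^{n}}$ satisfying $\cfn{\iota}\circ\alpha=f$. To promote $\alpha$ to an $\s$-coalgebra morphism I would note that $\cfn{\iota}^{\otimes n}$ is a monomorphism of $\ints$-free chain complexes for each $n$, so each $\homzs{n}(1,\cfn{\iota}^{\otimes n})$ is also injective; the defining naturality square for $f$ then descends via left-cancellation of $\prod_{n}\homzs{n}(1,\cfn{\iota}^{\otimes n})$ to the corresponding square for $\alpha$.

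Finally, $f(\Delta^{n})=\cfn{\iota}(\Delta^{n})$ together with the injectivity of $\cfn{\iota}$ forces $\alpha(\Delta^{n})=\Delta^{n}$. Since $\cfn{\Delta^{n}}_{n}$ is the rank-one free module generated by $\Delta^{n}$, $\alpha$ is already the identity in dimension $n$ and in particular an isomorphism there. Corollary~\ref{cor:automorphisms-trivial} then upgrades $\alpha$ to an automorphism of $\cfn{\Delta^{n}}$, and identifies it with the identity whenever $n\le 3$, giving $f=\cfn{\iota}$ in that range.

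The step I expect to require the most care is the inductive argument itself: one has to verify that every $(k-1)$-subsimplex of $\Delta^{n}$ genuinely arises as a face of a $k$-subsimplex whose image has already been controlled, and that the signs introduced by Corollary~\ref{cor:n-simplices-map-to-simplices} do not disrupt the Pigeonhole matching against the distinct faces of $\iota(\mu)$. The coalgebra-morphism cancellation and the final appeal to Corollary~\ref{cor:automorphisms-trivial} are essentially formal once this combinatorial step is in hand.
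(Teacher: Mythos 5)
Your proposal is correct and follows essentially the same route as the paper: a downward induction on the dimension of subsimplices, using Corollary~\ref{cor:n-simplices-map-to-simplices} together with the chain-map/no-cancellation (Pigeonhole) argument to show $f(\cfn{\Delta^{n}})\subset\cfn{\iota}(\cfn{\Delta^{n}})$, followed by an appeal to Corollary~\ref{cor:automorphisms-trivial} for the factorization through an automorphism and the identity statement for $n\le3$. You are in fact somewhat more careful than the paper's terse proof --- in particular in checking that the induced map $\alpha$ is an $\s$-coalgebra morphism via injectivity of $\homzs{n}(1,\cfn{\iota}^{\otimes n})$ --- but the underlying argument is the same.
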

\begin{proof}
Since $X$ is a simplicial complex, the map $\iota$ is an inclusion.

Suppose $\Delta^{k}\subset\Delta^{n}$ and $f(\cfn{\Delta^{k}})_{k}\subset\cfn{\Delta^{k}}_{k}$.
Since the boundary of $\Delta^{k}$ is an alternating sum of $k+1$
faces, and since they must map to $k-1$-dimensional simplices of
$\cfn{f(\Delta^{k})}$ with the same signs (so no cancellations can
take place) we must have $f(F_{i}\Delta^{k})\subset\cfn{f(\Delta^{k})}$
and the conclusion follows by downward induction on dimension. The
final statements follow immediately from corollary~\ref{cor:automorphisms-trivial}.
\end{proof}

\section{The functor $\nfc *$}

We define a complement to the $\cfn *$-functor: 
\begin{defn}
\label{def:fc}Define a functor
\[
\nfc *:\ircoalgcat\to\spaces
\]
to the category of semi-simplicial sets, as follows:

If $C\in\ircoalgcat$, define the $n$-simplices of $\nfc C$ to be
the $\mathfrak{S}$-coalgebra morphisms
\[
\ns n\to C
\]
where $\ns n=\cfn{\Delta^{n}}$ is the normalized chain-complex of
the standard $n$-simplex, equipped with the $\s$-coalgebra structure
defined in theorem~\ref{thm:ns-construct}.

Face-operations are duals of coface-operations
\[
d_{i}:[0,\dots,i-1,i+1,\dots n]\to[0,\dots,n]
\]
with $i=0,\dots,n$ and vertex $i$ in the target is \emph{not} in
the image of $d_{i}$.\end{defn}
\begin{rem*}
Compare this to the functor $\fc *$ defined in \cite{smith-cellular}.
The subscript $\mathbf{n}$ emphasizes that we do not take \emph{degeneracies}
into account.\end{rem*}
\begin{prop}
\label{prop:ux-map}If $X$ is a simplicial complex (i.e., its simplices
are uniquely determined by their vertices) there exists a natural
map
\[
u_{X}:X\to\nfc{\cfn X}
\]
\end{prop}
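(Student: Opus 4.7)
The plan is to send an $n$-simplex of $X$ to the $\s$-coalgebra morphism $\ns n\to\cfn X$ that it represents. Because $X$ is a simplicial complex, an $n$-simplex $\sigma\in X_{n}$ has $n+1$ distinct vertices and is uniquely determined by them, so there is a unique semi-simplicial inclusion $\iota_{\sigma}:\Delta^{n}\to X$ carrying the top simplex of $\Delta^{n}$ to $\sigma$. Applying the functor $\cfn *$ produces a chain map $\cfn{\iota_{\sigma}}:\ns n\to\cfn X$, and I would set
\[
u_{X}(\sigma)=\cfn{\iota_{\sigma}}\in\nfc{\cfn X}_{n}.
\]

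First I would verify that $\cfn{\iota_{\sigma}}$ is in fact an $\s$-coalgebra morphism, not merely a chain map. This is really the statement that $\cfn *$ lifts to a functor $\simpc\to\hircoalgcat$: the structure maps $f_{n}:\rs n\otimes\cfn X\to\cfn X^{\otimes n}$ are built from face operations and so are natural under any semi-simplicial map, which is precisely the compatibility required by the square in definition~\ref{def:s-coalgebra-morphism}.

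Next I would verify the face identities. The $i^{\mathrm{th}}$ face in $\nfc{\cfn X}_{n-1}$ is obtained by precomposition with $\cfn{d_{i}}$ for the standard coface $d_{i}:\Delta^{n-1}\to\Delta^{n}$. So
\[
d_{i}\,u_{X}(\sigma)=\cfn{\iota_{\sigma}}\circ\cfn{d_{i}}=\cfn{\iota_{\sigma}\circ d_{i}}=\cfn{\iota_{d_{i}\sigma}}=u_{X}(d_{i}\sigma),
\]
where the middle equality is functoriality of $\cfn *$ and the next uses that $\iota_{d_{i}\sigma}$ and $\iota_{\sigma}\circ d_{i}$ are semi-simplicial maps with the same vertices (hence equal, since $X$ is a complex). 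Naturality of $u$ in $X$ under a map $\phi:X\to Y$ of simplicial complexes reduces identically to the equality $\phi\circ\iota_{\sigma}=\iota_{\phi(\sigma)}$, which again holds by the uniqueness-by-vertices axiom for $Y$.

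I expect the main obstacle to be essentially bookkeeping rather than conceptual, namely confirming that the $\s$-coalgebra operations in section~\ref{sec:morphisms} really are natural enough for semi-simplicial inclusions between complexes to induce morphisms in $\hircoalgcat$. Once the functoriality of $\cfn *:\simpc\to\hircoalgcat$ is in hand, both the construction of $u_{X}$ and the verification of its axioms are automatic; the simplicial-complex hypothesis is used solely to ensure that $\sigma\mapsto\iota_{\sigma}$ is well-defined and injective on vertex data.
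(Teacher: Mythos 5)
Your proposal is correct and follows essentially the same route as the paper: define $u_{X}(\sigma)=\cfn{\iota_{\sigma}}$ using the canonical inclusion determined by the vertices of $\sigma$, rely on the naturality of the $\s$-coalgebra structure on $\ns{n}$ (theorem~\ref{thm:ns-construct}) to see this is a coalgebra morphism, and check compatibility with face operations. The paper leaves the face-operation check as ``not hard to see,'' which you have simply spelled out.
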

\begin{proof}
To prove the first statement, note that any simplex $\Delta^{k}$
in $X$ comes equipped with a canonical inclusion
\[
\iota:\Delta^{k}\to X
\]
The corresponding order-preserving map of vertices induces an $\s$-coalgebra
morphism 
\[
\cfn{\iota}:\cfn{\Delta^{k}}=\ns k\to\cfn X
\]
so $u_{X}$ is defined by
\[
\Delta^{k}\mapsto\cfn{\iota}
\]
It is not hard to see that this operation respects face-operations.\end{proof}
\begin{thm}
\label{thm:simplicial-complexes-determined}If $X\in\simpc$ is a
simplicial complex then the canonical map
\[
u_{X}:X\to\nfc{\cfn X}
\]
defined in proposition~\ref{prop:ux-map} is an isomorphism of the
3-skeleton.\end{thm}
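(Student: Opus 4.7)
The plan is to verify, separately, that the simplicial map $u_X$ is injective and surjective on $n$-simplices for each $n\le 3$, and that it respects face maps (the last point was already observed in the construction). Both properties will fall out of the two preceding corollaries, so the argument is really an assembly of tools already built.

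First I would establish injectivity in \emph{all} dimensions, not only $n\le 3$. Suppose $\iota_1,\iota_2:\Delta^n\to X$ are two simplices of $X$ with $\cfn{\iota_1}=\cfn{\iota_2}$ as $\s$-coalgebra morphisms $\ns n\to\cfn X$. Evaluating both maps on the top generator $\Delta^n\in\ns n$ yields the same element of $\cfn X_n$, namely the chain generator associated with each inclusion. Because $X$ is a simplicial complex, distinct $n$-simplices produce distinct chain generators, so $\iota_1=\iota_2$. This uses only that $\cfn X$ has a basis indexed by simplices of $X$.

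Next, for surjectivity in dimensions $n\le 3$, I would take an arbitrary $\s$-coalgebra morphism $f:\ns n\to\cfn X$ and show it must equal $\cfn{\iota}$ for some inclusion $\iota:\Delta^n\to X$. By corollary~\ref{cor:n-simplices-map-to-simplices}, the image $f(\Delta^n)$ is a single generator of $\cfn X_n$, i.e.\ a genuine $n$-simplex of $X$, and since $X$ is a simplicial complex this simplex is the image of a canonical inclusion $\iota:\Delta^n\to X$. Then corollary~\ref{cor:cf-gives-simplices} applies: it tells us that $f$ factors as $\alpha\circ\cfn{\iota}$ for an automorphism $\alpha$ of $\ns n$, and the $n\le 3$ clause of that corollary forces $\alpha=\mathrm{id}$, so $f=\cfn{\iota}$ lies in the image of $u_X$.

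The only obstacle worth flagging is the dimension restriction. The whole argument would extend verbatim to all skeleta if the automorphism-rigidity statement of corollary~\ref{cor:automorphisms-trivial} held in every dimension; but as noted in the remark following that corollary, the proof in dimensions $2$ and $3$ already required invoking the explicit coproduct formulas $\mathtt{e_0\otimes\Delta^2}$ and, crucially, the higher coproduct $\mathtt{e_1\otimes\Delta^3}$ to rule out the face-swapping automorphism of $\Delta^3$, and no uniform argument is known beyond that. So the content of the theorem at the 3-skeleton level is exactly the content of corollary~\ref{cor:cf-gives-simplices}, and the proof amounts to packaging injectivity (from the complex hypothesis on $X$) with the $n\le 3$ surjectivity that that corollary supplies, together with the already-verified face-compatibility of $u_X$.
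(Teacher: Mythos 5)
Your proof is correct and follows essentially the same route as the paper's: the paper likewise deduces the theorem from corollary~\ref{cor:n-simplices-map-to-simplices} (simplices map to simplices, giving surjectivity) together with corollary~\ref{cor:cf-gives-simplices} (uniqueness of those maps for $n\le3$, giving that every $\s$-coalgebra morphism $\ns n\to\cfn X$ is of the form $\cfn{\iota}$). You merely spell out the injectivity and face-compatibility steps that the paper leaves implicit.
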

\begin{proof}
This follows immediately from corollary~\ref{cor:n-simplices-map-to-simplices},
which implies that simplices map to simplices and corollary~\ref{cor:cf-gives-simplices},
which implies that these maps are \emph{unique. }\end{proof}
\begin{cor}
\label{cor:cellular-determines-pi1}If $X$ and $Y$ are simplicial
complexes with the property that there exists an isomorphism 
\[
\cfn X\to\cfn Y
\]
then their 3-skeleta are weakly equivalent and 
\[
\pi_{1}(X)\cong\pi_{1}(Y)
\]
\end{cor}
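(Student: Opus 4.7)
The plan is to leverage Theorem~\ref{thm:simplicial-complexes-determined} together with the functoriality of $\nfc{*}$. Given the hypothesized isomorphism $g\colon \cfn X \to \cfn Y$ of $\s$-coalgebras, I would first apply the functor $\nfc{*}$ to obtain an induced isomorphism $\nfc{g}\colon \nfc{\cfn X}\to \nfc{\cfn Y}$ of semi-simplicial sets, which is an iso because $\nfc{*}$ is a functor and $g$ has an $\s$-coalgebra inverse. Then I would fit this together with the natural maps $u_X$ and $u_Y$ from Proposition~\ref{prop:ux-map} into the commutative square
\[
\xymatrix{
X \ar[r]^{u_X} \ar[d] & \nfc{\cfn X} \ar[d]^{\nfc{g}} \\
Y \ar[r]_{u_Y} & \nfc{\cfn Y}
}
\]
where the left vertical arrow is defined as the composite $u_Y^{-1}\circ \nfc{g}\circ u_X$ \emph{on 3-skeleta}. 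By Theorem~\ref{thm:simplicial-complexes-determined}, both $u_X$ and $u_Y$ restrict to isomorphisms on 3-skeleta, so this composite defines an isomorphism $(X)^{(3)}\to (Y)^{(3)}$ of semi-simplicial sets.

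Next, I would pass from the isomorphism of 3-skeleta as semi-simplicial sets to the weak equivalence statement. Since simplicial complexes (in the sense of Definition~\ref{def:simplicial-set-complex}) satisfy Property~A, the geometric realizations of their 3-skeleta are CW complexes of dimension $\le 3$, and an isomorphism of underlying semi-simplicial sets induces a homeomorphism of realizations, hence in particular a weak homotopy equivalence $|X^{(3)}|\simeq |Y^{(3)}|$.

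Finally, the fundamental group statement follows by the standard fact that $\pi_1$ of a CW complex depends only on its $2$-skeleton, so $\pi_1(X)=\pi_1(X^{(2)})=\pi_1(X^{(3)}) \cong \pi_1(Y^{(3)})=\pi_1(Y)$. The main obstacle is really bookkeeping: one must verify that $\nfc{g}$, defined via post-composition with $g$, actually carries the image of $u_X$ into the image of $u_Y$ on the 3-skeleton level — this is precisely where Corollary~\ref{cor:n-simplices-map-to-simplices} and Corollary~\ref{cor:cf-gives-simplices} are needed, since they guarantee that composing a simplex inclusion $\cfn{\iota}\colon \ns{k}\to \cfn X$ with $g$ yields (for $k\le 3$) a map of the form $\cfn{\iota'}$ for a genuine simplex $\iota'\colon \Delta^{k}\to Y$, rather than some exotic $\s$-coalgebra morphism. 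Everything else is formal diagram-chasing once this rigidity of low-dimensional simplices under $\s$-coalgebra morphisms is in hand.
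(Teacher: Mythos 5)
Your proposal follows essentially the same route as the paper's own proof: apply the functor $\nfc *$ to the isomorphism $g$, invoke Theorem~\ref{thm:simplicial-complexes-determined} to identify the 3-skeleta of $X$ and $Y$ with those of $\nfc{\cfn X}$ and $\nfc{\cfn Y}$ via $u_X$ and $u_Y$, and conclude with the fact that $\pi_1$ depends only on the 2-skeleton. Your added remark correctly locates where the low-dimensional rigidity (Corollaries~\ref{cor:n-simplices-map-to-simplices} and~\ref{cor:cf-gives-simplices}) enters, namely inside the proof of Theorem~\ref{thm:simplicial-complexes-determined}, so no new ideas are needed beyond what the paper uses.
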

\begin{proof}
Any morphism $g:\cfn X\to\cfn Y$ induces a morphism of simplicial
sets
\[
\fc g:\nfc{\cfn X}\to\nfc{\cfn Y}
\]
and this is an isomorphism (and homeomorphism) of simplicial complexes
if $g$ is an isomorphism. The conclusion follows from theorem~\ref{thm:simplicial-complexes-determined}
which implies that the canonical maps
\begin{align*}
u_{X}:X\to & \nfc{\cfn X}\\
u_{Y}:Y\to & \nfc{\cfn Y}
\end{align*}
are isomorphisms of the 3-skeleta, and the fact that fundamental groups
are determined by the 2-skeleta.
\end{proof}
\appendix

\section{The functor $\cfn *$\label{sec:The-functor-cfn}}

We begin with the elementary but powerful Cartan Theory of Constructions,
originally described in \cite{Cartan3,Cartan4,Cartan5,Cartan6}:
\begin{lem}
\label{lem:cartanconstruction}Let $M_{i}$, $i=1,2$ be DGA-modules,
where:
\begin{enumerate}
\item $M_{1}=A_{1}\otimes N_{1}$, where $N_{1}$ is $\integers$-free and
$A_{1}$ is a DGA-algebra (so $M_{1}$, merely regarded as a DGA-algebra,
is free on a basis equal to a $\integers$-basis of $N_{1}$)
\item $M_{2}$ is a left DGA-module over a DGA-algebra $A_{2}$, possessing 

\begin{enumerate}
\item a sub-DG-module, $N_{2}\subset M_{2}$, such that $\partial_{M_{2}}|N_{2}$
is injective, 
\item a contracting chain-homotopy $\varphi:M_{2}\to M_{2}$ whose image
lies in $N_{2}\subset M_{2}$.
\end{enumerate}
\end{enumerate}

Suppose we are given a chain-map $f_{0}:M_{1}\to M_{2}$ in dimension
$0$ with $f_{0}(N_{1})\subseteq N_{2}$ and want to extend it to
a chain-map from $M_{1}$ to $M_{2}$, subject to the conditions:
\begin{itemize}
\item $f(N_{1})\subseteq N_{2}$
\item $f(a\otimes n)=g(a)\cdot f(n)$, where $g:A_{1}\to A_{2}$ is some
morphism of DG-modules such that $a\otimes n\mapsto g(a)\cdot f(n)$
 is a chain-map.
\end{itemize}

Then the extension $f:M_{1}\to M_{2}$ exists and is unique.

\end{lem}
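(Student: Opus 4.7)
The plan is to build $f$ by induction on the (internal) degree of elements of $N_1$, then extend to all of $M_1=A_1\otimes N_1$ by the forced formula $f(a\otimes n)=g(a)\cdot f(n)$. Since $A_1$-freeness of $M_1$ on $N_1$ means any extension is determined by its restriction to $N_1$, both existence and uniqueness reduce to constructing $f|N_1$ uniquely.

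First I would handle \emph{uniqueness}. Given two extensions $f,f'$ agreeing in dimension $0$, set $h=f-f'$; by hypothesis $h|N_1$ lands in $N_2$. Assuming inductively that $h$ vanishes on $(N_1)_{<k}$, for $n\in(N_1)_k$ one has $\partial h(n)=h(\partial n)=0$ because $\partial n\in A_1\otimes(N_1)_{<k}$ and $h(a\otimes m)=g(a)h(m)=0$ by the inductive hypothesis. Since $\partial_{M_2}|N_2$ is injective and $h(n)\in N_2$, this forces $h(n)=0$.

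For \emph{existence}, I would define $f$ on $N_1$ inductively: with $f_0|N_1$ given in degree $0$, and $f$ already defined on $(N_1)_{<k}$ (hence, by the formula $f(a\otimes n)=g(a)\cdot f(n)$, on $A_1\otimes (N_1)_{<k}$), set
\[
f(n)\;:=\;\varphi\bigl(f(\partial n)\bigr)\qquad\text{for } n\in (N_1)_k.
\]
This lies in $N_2$ because $\mathrm{im}\,\varphi\subseteq N_2$. Two things need checking. First, $f(\partial n)$ must be a cycle in $M_2$: since $\partial^2 n=0$ in $M_1$ and $f$ is (inductively) a chain map on $A_1\otimes(N_1)_{<k}$, one has $\partial_{M_2}f(\partial n)=f(\partial^2 n)=0$. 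Second, the resulting $f$ must satisfy $\partial_{M_2}f(n)=f(\partial n)$: applying the contracting identity $\partial\varphi+\varphi\partial=\mathrm{id}$ (in positive degrees) to the cycle $f(\partial n)$ yields $\partial\varphi(f(\partial n))=f(\partial n)$, as required. Then extend by $f(a\otimes n):=g(a)\cdot f(n)$; the hypothesis on $g$ guarantees that this extension is a chain map on all of $M_1$.

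The only genuine subtlety is keeping track of what "already defined" means during the induction: because $\partial n$ for $n\in N_1$ generally lies in $A_1\otimes N_1$ rather than in $N_1$, one must be careful that the inductive step for $f|N_1$ in degree $k$ uses only the values of $f$ on $A_1\otimes (N_1)_{<k}$, which in turn are forced by $f|(N_1)_{<k}$ together with the prescribed module-type formula. Once this bookkeeping is in place, both the chain-map property and the $N_2$-confinement propagate automatically, and the injectivity of $\partial_{M_2}|N_2$ closes the uniqueness argument.
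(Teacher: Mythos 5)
Your proposal is correct and follows essentially the same route as the paper: the paper's own proof is a three-line sketch of exactly this induction (uniqueness via the fact that $f$ is determined on $N_{1}$, that $\operatorname{im}\varphi\subseteq N_{2}$, and that $\partial_{M_{2}}|N_{2}$ is injective, with existence dismissed as standard homological algebra), and you have simply written out the lifting step $f(n)=\varphi(f(\partial n))$ and the difference-map argument in full. The added care about the degree-$0$ form of the contracting-homotopy identity and about which values of $f$ the inductive step may use is a faithful elaboration, not a departure.
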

\begin{rem*}
In applications of this result, the morphism $g$ will often be a
morphism of DGA-algebras, but this is not necessary.

The \emph{existence} of $f$ immediately follows from basic homological
algebra; the interesting aspect of it is its \emph{uniqueness} (not
merely uniqueness up to a chain-homotopy). We will use it repeatedly
to prove associativity conditions by showing that two apparently different
maps satisfying the hypotheses must be \emph{identical}.

The Theory of Constructions formed the cornerstone of Henri Cartan's
elegant computations of the homology and cohomology of Eilenberg-MacLane
spaces in \cite{Cartan11}.\end{rem*}
\begin{proof}
The uniqueness of $f$ follows by induction and the facts that: 
\begin{enumerate}
\item $f$ is determined by its values on $N_{1}$ 
\item the image of the contracting chain-homotopy, $\varphi$, lies in $N_{2}\subset M_{2}$.
\item the boundary map of $M_{2}$ is \emph{injective} on $N_{2}$ (which
implies that there is a \emph{unique} lift of $f$ into the next higher
dimension).
\end{enumerate}
\end{proof}
Now construct a contracting cochain on the normalized chain-complex
of a standard simplex:
\begin{defn}
\label{def:simplex-contracting-cochain}Let $\Delta^{k}$ be a standard
$k$-simplex with vertices $\{[0],\dots,[k]\}$ and $j$-faces $\{[i_{0},\dots,i_{j}]\}$
with $i_{0}<\cdots<i_{j}$ and let $s^{k}$ denote its normalized
chain-complex with boundary map $\partial$. This is equipped with
an augmentation
\[
\epsilon:s^{k}\to\ints
\]
that maps all vertices to $1\in\ints$ and all other simplices to
$0$. Let 
\[
\iota_{k}:\ints\to s^{k}
\]
 denote the map sending $1\in\ints$ to the image of the vertex $[n]$.
Then we have a contracting cochain\textit{\emph{
\begin{equation}
\varphi_{k}([i_{0},\dots,i_{t}]=\left\{ \begin{array}{cc}
(-1)^{t+1}[i_{0},\dots,i_{t},k] & \mathrm{if}\, i_{t}\ne k\\
0 & \mathrm{if}\, i_{t}=k
\end{array}\right.\label{eq:simplex-contracting-cochain}
\end{equation}
and $1-\iota_{k}\circ\epsilon=\partial\circ\varphi_{k}+\varphi_{k}\circ\partial$.}}\end{defn}
\begin{thm}
\label{thm:ns-construct}The normalized chain-complex of $[i_{0},\dots,i_{k}]=\Delta^{k}$
has a $\s$-coalgebra structure that is natural with respect to order-preserving
mappings of vertex-sets
\[
[i_{0},\dots,i_{k}]\to[j_{0},\dots,j_{\ell}]
\]
with $j_{0}\le\cdots\le j_{\ell}$ and $\ell\ge k$. This $\s$-coalgebra
is denoted $\ns k$.

If $X$ is a simplicial complex (a semi-simplicial set whose simplices
are uniquely determined by their vertices), then the normalized chain-complex
of $X$ has a natural $\s$-coalgebra structure 
\[
\cfn X=\dlimit\ns k
\]
 for $\Delta^{n}\in\boldsymbol{\Delta}\downarrow X$ --- the simplex
category of $X$. \end{thm}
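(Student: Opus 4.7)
The plan is to build the $\s$-coalgebra structure on $\ns k$ by induction on dimension, using the Cartan Theory of Constructions (Lemma~\ref{lem:cartanconstruction}) both to produce the structure maps and to verify the operad identities. The contracting cochain $\varphi_k$ of Definition~\ref{def:simplex-contracting-cochain} supplies the acyclicity needed to invoke Cartan's lemma.

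For each $n \ge 2$, I would produce a $\zs n$-equivariant chain map
\[
f_n : \rs n \otimes \ns k \to \ns k^{\otimes n}
\]
extending the $0$-dimensional base case $[\,] \otimes [j] \mapsto [j]^{\otimes n}$ on each vertex $[j]$. The source is free as a $\zs n$-complex, since $\rs n$ is the normalized bar-resolution of $\integers$ over $\zs n$. The target carries a contracting chain-homotopy $\Phi$ assembled from $\varphi_k$ by the standard telescope formula, whose image lies in the sub-DG-module $N_2 \subset \ns k^{\otimes n}$ of tensors whose last factor contains the vertex $[k]$; the restriction $\partial|_{N_2}$ is injective because $\varphi_k$ is an honest contraction landing in simplices ending at $[k]$. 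Lemma~\ref{lem:cartanconstruction} then provides a unique such extension.

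To see that the $f_n$ assemble into a morphism of operads $\s \to \coend(\ns k)$, one verifies the compatibility diagrams~\ref{dia:coalgebra-over-operad}. Both composites around such a diagram are $\zs{n+m-1}$-equivariant chain-maps $\rs n \otimes \rs m \otimes \ns k \to \ns k^{\otimes n+m-1}$ that agree on the vertex-level base case, each sending $[\,] \otimes [\,] \otimes [j]$ to $[j]^{\otimes n+m-1}$, so Cartan's uniqueness forces them to coincide. Naturality under an order-preserving vertex-map $\iota:[i_0,\dots,i_k] \to [j_0,\dots,j_\ell]$ follows by the same device: both $f_n \circ (1 \otimes \cfn{\iota})$ and $\cfn{\iota}^{\otimes n} \circ f_n$ extend the common base assignment, and $\cfn{\iota}$ intertwines the relevant contractions (since it preserves the top-vertex structure), so they agree.

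For the final assertion, the hypothesis that $X \in \simpc$ has simplices uniquely determined by their vertices means every $n$-simplex of $X$ arises from a unique order-preserving inclusion $\Delta^n \hookrightarrow X$, so $\cfn X = \dlimit \ns n$ is a filtered colimit over the simplex category. Since $\rs n \otimes (-)$ and the $n$-fold tensor power commute with filtered colimits in $\chaincat$, the naturality established above lets the $f_n$ descend to a unique $\s$-coalgebra structure on $\cfn X$. The main obstacle is the operad-identity verification; a brute-force calculation inside the $\rs n$ would be forbidding, but the Cartan formalism reduces every such identity to the trivial observation that two equivariant chain-maps out of a free complex which agree on a single generator per $\zs{\bullet}$-orbit at the bottom must coincide throughout.
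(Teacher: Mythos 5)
Your overall strategy is the paper's: build $f_n$ by a telescoped contraction $\Phi$ on $\ns k^{\otimes n}$ and use Cartan's uniqueness to get the operad identities for free. But your closing claim --- that ``two equivariant chain-maps out of a free complex which agree on a single generator per $\zs{\bullet}$-orbit at the bottom must coincide throughout'' --- is false, and it is exactly where the real work lives. Freeness of $\rs n$ over $\zs n$ gives existence and uniqueness only \emph{up to equivariant chain-homotopy}; to get uniqueness on the nose, Lemma~\ref{lem:cartanconstruction} additionally requires that every map being compared carry the free generators into the sub-DG-module $N_2=\img\Phi$ on which $\partial$ is injective. For the defining recursion this is Condition~\ref{cond:invariant-condition}: the \emph{leftmost} tensor factor must lie in $\img\varphi_k$, i.e.\ contain the top vertex. (Your proposal puts this constraint on the \emph{last} factor, which contradicts the formula $\Phi=\varphi_k\otimes1\otimes\cdots\otimes1+\cdots$, effectively $\varphi_k\otimes1\otimes\cdots\otimes1$ above dimension $0$.) For diagram~\ref{dia:coalgebra-over-operad} one must check that \emph{both} composites $\rs n\otimes\rs m\otimes\ns k\to\ns k^{\otimes n+m-1}$ satisfy this image condition before Cartan uniqueness applies; the paper does so by observing that composites of $f$-maps preserve the condition and that $\circ_i(1\otimes A(S_n,1)\otimes\cdots)\subseteq1\otimes A(S_{n+m-1},1)$. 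Without that verification the appeal to Cartan's lemma proves nothing.

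A second, smaller gap: you assert that $\cfn{\iota}$ ``intertwines the relevant contractions (since it preserves the top-vertex structure).'' An order-preserving injection $[i_{0},\dots,i_{k}]\to[j_{0},\dots,j_{\ell}]$ with $\ell>k$ need not send the top vertex to the top vertex, so $\cfn{\iota}$ does \emph{not} intertwine $\varphi_{k}$ and $\varphi_{\ell}$ in general. The paper's naturality argument is different: the recursion~\ref{eq:high-diag-comp} defines the structure on $\Delta^{k}$ in terms of the already-defined (and inductively natural) structure on $\partial\Delta^{k}$, and the only property of the vertex $k$ used in equation~\ref{eq:simplex-contracting-cochain}, Condition~\ref{cond:invariant-condition} and the recursion is that it is the highest-numbered vertex of \emph{its own} simplex, so the formulas are invariant under order-preserving relabelling; naturality is built into the construction rather than deduced from an intertwining of contractions.
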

\begin{rem*}
The author has a Common LISP program for computing $f_{n}(x\otimes C(\Delta^{k}))$
--- the number of terms is exponential in the dimension of $x$. 

Compare this with the functor $\cf *$ defined in \cite{Smith:1994}
and \cite{smith-cellular}. For simplicial complexes, $\cf X=\cfn X$.\end{rem*}
\begin{proof}
\textit{\emph{If $C=s^{k}=C(\Delta^{k})$ --- the (unnormalized) chain
complex --- we can define a corresponding contracting homotopy on
$C^{\otimes n}$ via 
\begin{align*}
\Phi= & \varphi_{k}\otimes1\otimes\cdots\otimes1+\iota_{k}\circ\epsilon\otimes\varphi_{k}\otimes\cdots\otimes1+\\
 & \cdots+\iota_{k}\circ\epsilon\otimes\cdots\otimes\iota_{k}\circ\epsilon\otimes\varphi_{k}
\end{align*}
where $\varphi_{k}$, $\iota_{k}$, and $\epsilon$ are as in definition~\ref{def:simplex-contracting-cochain}.
}}Above dimension $0$, $\Phi$ is effectively equal to $\varphi_{k}\otimes1\otimes\cdots\otimes1$\textit{\emph{.
Now set $M_{2}=C^{\otimes n}$ and $N_{2}=\img(\Phi)$. In dimension
$0$, we define $f_{n}$ for all $n$ via:
\[
f_{n}(A\otimes[0])=\left\{ \begin{array}{ll}
[0]\otimes\cdots\otimes[0] & \mathrm{if}\, A=[\,]\\
0 & \mathrm{if}\,\dim A>0
\end{array}\right.
\]
This clearly makes $s^{0}$ a coalgebra over $\s$.}}

\textit{\emph{Suppose that the $f_{n}$ are defined below dimension
$k$. Then $\cf{\partial\Delta^{k}}$ is well-defined and satisfies
the conclusions of this theorem. We define $f_{n}(a[a_{1}|\dots|a_{j}]\otimes[0,\dots,k])$
by induction on $j$, requiring that:}}
\begin{condition}[Invariant Condition]
\label{cond:invariant-condition}
\begin{equation}
f_{n}(A(S_{n},1)\otimes s^{k})\subseteq[i_{1},\dots,k]\otimes\mathrm{other}\,\mathrm{terms}\label{eq:invariant-condition}
\end{equation}

--- in other words, the \emph{leftmost factor} must be in $\img\varphi_{k}$.
This is the same as the leftmost factors being ``rearward'' faces
of $\Delta^{k}$.
\end{condition}
Now we set
\begin{eqnarray}
f_{n}(A\otimes s^{k}) & = & \Phi\circ f_{n}(\partial A\otimes s^{k})\nonumber \\
 & + & (-1)^{\dim A}\Phi\circ f_{n}(A\otimes\partial s^{k})\label{eq:high-diag-comp}
\end{eqnarray}
 where $A\in A(S_{n},1)\subset\rs n$ and the term $f_{n}(A\otimes\partial s^{k})$
refers to the coalgebra structure of $\cf{\partial\Delta^{k}}$.

The term $f_{n}(A\otimes\partial s^{k})$ is defined by induction
and diagram~\ref{dia:coalgebra-over-operad} commutes for it. The
term $f_{n}(\partial A\otimes s^{k})$ is defined by induction on
the dimension of $A$ and diagram~\ref{dia:coalgebra-over-operad}
for it as well.

The composite maps in both branches of diagram~\ref{dia:coalgebra-over-operad}
satisfy condition~\ref{cond:invariant-condition} since:
\begin{enumerate}
\item any composite of $f_{n}$-maps will continue to satisfy condition~\ref{cond:invariant-condition}.
\item $\circ_{i}(1\otimes A(S_{n},1)\otimes\cdots\otimes A(S_{m},1))\subseteq1\otimes A(S_{n+m-1},1)$
so that composing an $f_{n}$-map with $\circ_{i}$ results in a map
that still satisfies condition~\ref{cond:invariant-condition}.
\item the diagram commutes in lower dimensions (by induction on $k$)
\end{enumerate}
Lemma~\ref{lem:cartanconstruction} implies that the composites one
gets by following the two branches of diagram~\ref{dia:coalgebra-over-operad}
must be \emph{equal, }so the diagram commutes.

We ultimately get an expression for $f_{n}(x\otimes[0,\dots,k])$
as a sum of tensor-products of sub-simplices of $[0,\dots,k]$ ---
given as ordered lists of vertices.

We claim that this $\s$-coalgebra structure is natural with respect
to ordered mappings of vertices. This follows from the fact that the
only significant property that the vertex $k$ \emph{has} in equation~\ref{eq:simplex-contracting-cochain},
condition~\ref{cond:invariant-condition} and equation~\ref{eq:high-diag-comp}
is that it is the \emph{highest numbered} vertex. 
\end{proof}
We conclude this section some computations of higher coproducts:
\begin{example}
\label{example:e1timesdelta2}If $[0,1,2]=\Delta^{2}$ is a $2$-simplex,
then

\begin{equation}
f_{2}([\,]\otimes\Delta^{2})=\Delta^{2}\otimes F_{0}F_{1}\Delta^{2}+F_{2}\Delta^{2}\otimes F_{0}\Delta^{2}+F_{1}F_{2}\Delta^{2}\otimes\Delta^{2}\label{eq:delta-2-coproduct-1}
\end{equation}
--- the standard (Alexander-Whitney) coproduct --- and

\begin{align*}
f_{2}([(1,2)]\otimes\Delta^{2})= & [0,1,2]\otimes[1,2]-[0,2]\otimes[0,1,2]\\
 & -[0,1,2]\otimes[0,1]
\end{align*}
or, in face-operations

\begin{align}
f_{2}([(1,2)]\otimes\Delta^{2})= & \Delta^{2}\otimes F_{0}\Delta^{2}-F_{1}\Delta^{2}\otimes\Delta^{2}\label{eq:e1timesdelta2-1}\\
 & -\Delta^{2}\otimes F_{2}\Delta^{2}\nonumber 
\end{align}
\end{example}
\begin{proof}
If we write $\Delta^{2}=[0,1,2]$, we get
\[
f_{2}([\,]\otimes\Delta^{2})=[0,1,2]\otimes[2]+[0,1]\otimes[1,2]+[0]\otimes[0,1,2]
\]

To compute $f_{2}([(1,2)]\otimes\Delta^{2})$ we have a version of
equation~\ref{eq:high-diag-comp}:
\begin{align*}
f(e_{1}\otimes\Delta^{2}) & =\Phi_{2}(f_{2}(\partial e_{1}\otimes\Delta^{2})-\Phi_{2}f_{2}(e_{1}\otimes\partial\Delta^{2})\\
 & =-\Phi_{2}(f_{2}((1,2)\cdot[\,]\otimes\Delta^{2})+\Phi_{2}(f_{2}([\,]\otimes\Delta^{2})-\Phi_{2}f_{2}(e_{1}\otimes\partial\Delta^{2})
\end{align*}
Now 
\begin{align*}
\Phi_{2}(1,2)\cdot(f_{2}([\,]\otimes\Delta^{2})= & (\varphi_{2}\otimes1)\bigl([2]\otimes[0,1,2]-[1,2]\otimes[0,1]\\
 & +[0,1,2]\otimes[0]\bigr)\\
 & +(i\circ\epsilon\otimes\varphi_{2})\bigl([2]\otimes[0,1,2]\\
 & -[1,2]\otimes[0,1]+[0,1,2]\otimes[0]\bigr)\\
= & 0
\end{align*}
and
\begin{align*}
\Phi_{2}(f_{2}([\,]\otimes\Delta^{2})= & (\varphi_{2}\otimes1)\bigl([0,1,2]\otimes[2]+[0,1]\otimes[1,2]\\
 & +[0]\otimes[0,1,2]\bigr)\\
= & [0,1,2]\otimes[1,2]-[0,2]\otimes[0,1,2]
\end{align*}
In addition, proposition~\ref{pro:simplicespropertyS} implies that
\begin{align*}
f_{2}(e_{1}\otimes\partial\Delta^{2})= & [1,2]\otimes[1,2]-[0,2]\otimes[0,2]\\
 & +[0,1]\otimes[0,1]
\end{align*}
so that
\[
\Phi_{2}f_{2}(e_{1}\otimes\partial\Delta^{2})=[0,1,2]\otimes[0,1]
\]

We conclude that
\begin{align*}
f_{2}([(1,2)]\otimes\Delta^{2})= & [0,1,2]\otimes[1,2]-[0,2]\otimes[0,1,2]\\
 & -[0,1,2]\otimes[0,1]
\end{align*}
 which implies equation~\ref{eq:e1timesdelta2-1}.
\end{proof}
We end this section with computations of some ``higher coproducts.''
We have a $\zs 2$-equivariant chain-map
\[
f_{2}(\rs 2\otimes C)\to C\otimes C
\]

\begin{prop}
\label{prop:e1timesdelta2}If $\Delta^{2}$ is a $2$-simplex, then:

\begin{equation}
f_{2}([\,]\otimes\Delta^{2})=\Delta^{2}\otimes F_{0}F_{1}\Delta^{2}+F_{2}\Delta^{2}\otimes F_{0}\Delta^{2}+F_{1}F_{2}\Delta^{2}\otimes\Delta^{2}\label{eq:delta-2-coproduct}
\end{equation}
Here $e_{0}=[\,]$ is the 0-dimensional generator of $\rs 2$ and
this is just the standard (Alexander-Whitney) coproduct. 

In addition, we have: 
\begin{align}
f_{2}([(1,2)]\otimes\Delta^{2})= & \Delta^{2}\otimes F_{0}\Delta^{2}-F_{1}\Delta^{2}\otimes\Delta^{2}\label{eq:e1timesdelta2}\\
 & -\Delta^{2}\otimes F_{2}\Delta^{2}\nonumber 
\end{align}
\end{prop}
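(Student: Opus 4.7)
The plan is to apply the recursive definition from equation~\ref{eq:high-diag-comp} using the contracting homotopy $\Phi$ from the proof of Theorem~\ref{thm:ns-construct}, exactly in the spirit of the computation already carried out in Example~\ref{example:e1timesdelta2}. Indeed, the two formulas in this proposition are the same formulas exhibited there, so the proof amounts to organizing that computation as a proof and checking the signs.

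For equation~\ref{eq:delta-2-coproduct}, I would observe that since $[\,]$ has dimension $0$, $\partial[\,]=0$, so the recursion collapses to $f_{2}([\,]\otimes\Delta^{2})=-\Phi\circ f_{2}([\,]\otimes\partial\Delta^{2})$. The inductive hypothesis (from the construction in Theorem~\ref{thm:ns-construct}) tells us that on $1$-simplices and vertices the coproduct has already been pinned down to the standard Alexander--Whitney form; applying $\Phi=\varphi_{2}\otimes1+\iota_{2}\epsilon\otimes\varphi_{2}$ to the boundary terms and invoking Condition~\ref{cond:invariant-condition} (which forces the leftmost factor into the image of $\varphi_{2}$, hence kills the redundant summands) produces precisely the three terms on the right of equation~\ref{eq:delta-2-coproduct}.

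For equation~\ref{eq:e1timesdelta2}, I apply the recursion to $e_{1}=[(1,2)]$, which has boundary $\partial e_{1}=((1,2)-1)\cdot[\,]$ in $\rs 2$. This splits $f_{2}(e_{1}\otimes\Delta^{2})$ into three contributions: $\Phi$ applied to $f_{2}((1,2)\cdot[\,]\otimes\Delta^{2})$, $-\Phi$ applied to $f_{2}([\,]\otimes\Delta^{2})$, and $-\Phi$ applied to $f_{2}(e_{1}\otimes\partial\Delta^{2})$. The first term vanishes because, after applying the $S_{2}$-equivariance to transpose the factors of equation~\ref{eq:delta-2-coproduct}, every summand has its leftmost factor already ``rearward'' (it either lies in the image of $\varphi_{2}$ or is a vertex other than $[2]$), so $\varphi_{2}\otimes 1$ kills it and the $\iota_{2}\epsilon\otimes\varphi_{2}$ piece kills it by dimension reasons. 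The second term contributes the two summands $\Delta^{2}\otimes F_{0}\Delta^{2}-F_{1}\Delta^{2}\otimes\Delta^{2}$ after applying $\varphi_{2}\otimes 1$ and using $\varphi_{2}([0,1,2])=0$, $\varphi_{2}([0,1])=-[0,1,2]$. The third term is handled by first invoking Proposition~\ref{pro:simplicespropertyS} to write $f_{2}(e_{1}\otimes\partial\Delta^{2})$ as the diagonal sum $\sum\pm F_{i}\Delta^{2}\otimes F_{i}\Delta^{2}$, then applying $\Phi$; the only summand that survives the combined action of $\varphi_{2}\otimes 1$ and the cancellation from the vertex piece is $-\Delta^{2}\otimes F_{2}\Delta^{2}$.

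The main obstacle is purely clerical: keeping track of Koszul signs, the signs in $\varphi_{k}$ (definition~\ref{def:simplex-contracting-cochain}), and the cancellations between the two summands of $\Phi$ when applied to tensor products where the left factor is already a ``rearward'' simplex. Once those are handled carefully, collecting terms gives precisely the three summands on the right of equation~\ref{eq:e1timesdelta2}.
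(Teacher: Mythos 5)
Your overall strategy for equation~\ref{eq:e1timesdelta2} --- expand $\partial e_{1}$ in $\rs 2$, apply the recursion with the contracting homotopy $\Phi$, show the $(1,2)$-twisted term dies, and feed $f_{2}(e_{1}\otimes\partial\Delta^{2})$ through proposition~\ref{pro:simplicespropertyS} --- is exactly the paper's. But one step genuinely fails: equation~\ref{eq:delta-2-coproduct} is \emph{not} obtained from the recursion. Formula~\ref{eq:high-diag-comp} applies only to bar-resolution elements subject to the invariant condition~\ref{cond:invariant-condition}; the $e_{0}$-coproduct is the \emph{base case} of the construction, prescribed explicitly by the Alexander--Whitney formula (equation~\ref{eq:big-diag1}). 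Indeed the AW coproduct violates condition~\ref{cond:invariant-condition} (its leftmost factors $[0,1]$ and $[0]$ are not rearward faces), which is precisely why the identity $f=\Phi\circ f\circ\partial$ cannot hold for it; if you actually compute $\Phi\circ f_{2}([\,]\otimes\partial\Delta^{2})$ you get terms such as $-[1,2]\otimes[1,2]$ and $[2]\otimes[0,1,2]$, not the right-hand side of~\ref{eq:delta-2-coproduct}. The paper simply writes the AW formula down. (Also, since $\dim[\,]=0$, the recursion would carry a $+$ sign, not the $-$ you wrote.)

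Second, the sign bookkeeping in your second computation is internally inconsistent. You take $\partial e_{1}=((1,2)-1)\cdot[\,]$, the opposite of the paper's convention $\partial e_{k}=(1+(-1)^{k}T)e_{k-1}$, yet you then assign the contribution $\Delta^{2}\otimes F_{0}\Delta^{2}-F_{1}\Delta^{2}\otimes\Delta^{2}$ to the term $-\Phi f_{2}([\,]\otimes\Delta^{2})$; with your signs that term contributes the \emph{negative} of this, so two of the three summands of~\ref{eq:e1timesdelta2} would come out with the wrong sign. You also use $\varphi_{2}([0,1])=-[0,1,2]$, whereas definition~\ref{def:simplex-contracting-cochain} gives $(-1)^{t+1}=+1$ here. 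Finally, in the vanishing argument for the twisted term, the transposed leftmost factors $[2]$, $[1,2]$, $[0,1,2]$ are killed by $\varphi_{2}$ because they all \emph{end in the top vertex}; $\varphi_{2}$ does not kill ``a vertex other than $[2]$'' --- it kills $[2]$ itself and sends $[0]$, $[1]$ to nonzero edges. The conclusion of that step is still correct, but the stated reason is backwards. These defects are fixable, but as written the computation would not reproduce equation~\ref{eq:e1timesdelta2}.
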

\begin{proof}
If we write $\Delta^{2}=[0,1,2]$, we get
\[
f_{2}([\,]\otimes\Delta^{2})=[0,1,2]\otimes[2]+[0,1]\otimes[1,2]+[0]\otimes[0,1,2]
\]

To compute $f_{2}([(1,2)]\otimes\Delta^{2})$ we have a version of
equation~\ref{eq:hdiag-comp}:
\begin{align*}
f_{2}(e_{1}\otimes\Delta^{2}) & =\Phi_{2}(f_{2}(\partial e_{1}\otimes\Delta^{2})-\Phi_{2}f_{2}(e_{1}\otimes\partial\Delta^{2})\\
 & =-\Phi_{2}(f_{2}((1,2)\cdot[\,]\otimes\Delta^{2})+\Phi_{2}(f_{2}([\,]\otimes\Delta^{2})-\Phi_{2}f_{2}(e_{1}\otimes\partial\Delta^{2})
\end{align*}
Now 
\begin{align*}
\Phi_{2}(1,2)\cdot(f_{2}([\,]\otimes\Delta^{2})= & (\varphi_{2}\otimes1)([2]\otimes[0,1,2]-[1,2]\otimes[0,1]+[0,1,2]\otimes[0])\\
 & +(i\circ\epsilon\otimes\varphi_{2})([2]\otimes[0,1,2]-[1,2]\otimes[0,1]+[0,1,2]\otimes[0])\\
= & 0
\end{align*}
and
\begin{align*}
\Phi_{2}(f_{2}([\,]\otimes\Delta^{2}) & =(\varphi_{2}\otimes1)\left([0,1,2]\otimes[2]+[0,1]\otimes[1,2]+[0]\otimes[0,1,2]\right)\\
 & =[0,1,2]\otimes[1,2]-[0,2]\otimes[0,1,2]
\end{align*}
In addition, proposition~\ref{pro:simplicespropertyS} implies that
\[
f_{2}(e_{1}\otimes\partial\Delta^{2})=[1,2]\otimes[1,2]-[0,2]\otimes[0,2]+[0,1]\otimes[0,1]
\]
so that
\[
\Phi_{2}f_{2}(e_{1}\otimes\partial\Delta^{2})=[0,1,2]\otimes[0,1]
\]

We conclude that
\begin{align*}
f_{2}([(1,2)]_{2}\otimes\Delta^{2})= & [0,1,2]\otimes[1,2]-[0,2]\otimes[0,1,2]\\
 & -[0,1,2]\otimes[0,1]
\end{align*}
which implies equation~\ref{eq:e1timesdelta2}.
\end{proof}
We continue this computation one dimension higher:
\begin{prop}
\label{prop:e1timesdelta3}If $\Delta^{3}$ is a $3$-simplex, then:
\foreignlanguage{english}{
\begin{align}
f_{2}([(1,2)]\otimes\Delta^{3}) & =F_{1}F_{2}\Delta^{3}\otimes\Delta^{3}-F_{2}\Delta^{3}\otimes F_{0}\Delta^{3}\nonumber \\
 & +\Delta^{3}\otimes F_{0}F_{1}\Delta^{3}-\Delta^{3}\otimes F_{0}F_{3}\Delta^{3}\nonumber \\
 & -F_{1}\Delta^{3}\otimes F_{3}\Delta^{3}-\Delta^{3}\otimes F_{2}F_{3}\Delta^{3}\label{eq:e1timesdelta3}
\end{align}
}\end{prop}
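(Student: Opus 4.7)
The plan is to apply the recursive formula (equation~\ref{eq:high-diag-comp}) as in the proof of Proposition~\ref{prop:e1timesdelta2}, now with $k = 3$. Since $\dim e_1 = 1$, the formula specializes to
\[
f_2(e_1 \otimes \Delta^3) \;=\; \Phi_3 \circ f_2(\partial e_1 \otimes \Delta^3) \;-\; \Phi_3 \circ f_2(e_1 \otimes \partial \Delta^3),
\]
where $\Phi_3 = \varphi_3 \otimes 1 + \iota_3\epsilon \otimes \varphi_3$ is the contracting homotopy on $(s^3)^{\otimes 2}$ built from the contracting cochain of Definition~\ref{def:simplex-contracting-cochain}. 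I would handle the two summands separately.

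For the first summand, expand $\partial e_1$ in the bar-resolution so that this summand reduces to evaluating $\Phi_3$ on the Alexander-Whitney coproduct $f_2([\,]\otimes\Delta^3) = \sum_{i=0}^{3}[0,\dots,i]\otimes[i,\dots,3]$ and on its $(1,2)$-twist. Exactly as in the proof of Proposition~\ref{prop:e1timesdelta2}, the twisted piece vanishes identically: every leftmost tensor factor of $(1,2)\cdot f_2([\,]\otimes\Delta^3)$ is a back-face $[i,\dots,3]$ containing the terminal vertex $3$, hence killed by $\varphi_3$, while the $\iota_3\epsilon$ summand of $\Phi_3$ kills the only dimension-zero first factor (namely $[3]$) because $\varphi_3([0,1,2,3])=0$. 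The untwisted piece produces three surviving monomials coming from the indices $0\le i\le 2$ (the $i=3$ summand dies because $\varphi_3$ annihilates $[0,1,2,3]$), which yield the three terms of equation~\ref{eq:e1timesdelta3} whose second tensor factor is a back-face of $\Delta^3$ with final vertex $3$.

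For the second summand, expand $\partial\Delta^3 = F_0\Delta^3 - F_1\Delta^3 + F_2\Delta^3 - F_3\Delta^3$ and apply Proposition~\ref{prop:e1timesdelta2} to each face; this is legitimate by the naturality of the $\s$-coalgebra structure with respect to order-preserving vertex maps guaranteed by Theorem~\ref{thm:ns-construct}. This produces twelve signed tensor monomials before applying $\Phi_3$. The crucial simplification is that three of the four faces, namely $F_0\Delta^3$, $F_1\Delta^3$, and $F_2\Delta^3$, all contain the terminal vertex $3$, so every monomial arising from them has its leftmost factor containing $3$ and is annihilated by both summands of $\Phi_3$. Only the face $F_3\Delta^3 = [0,1,2]$ contributes, and applying $\Phi_3$ to its three monomials produces the three remaining terms of equation~\ref{eq:e1timesdelta3} whose second tensor factor is a face of $F_3\Delta^3$.

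Combining the two contributions yields the six-term formula of equation~\ref{eq:e1timesdelta3}. The main obstacle is purely bookkeeping: the Koszul sign on the $S_2$-twist, the alternating signs in $\partial\Delta^3$, the $(-1)^{t+1}$ in the contracting cochain $\varphi_3$, and the $(-1)^{\dim A} = -1$ factor in equation~\ref{eq:high-diag-comp} all have to be tracked simultaneously, and one must verify at each stage that the intermediate results lie in the image of $\varphi_3$ so that condition~\ref{cond:invariant-condition} is preserved. No ideas beyond those used in the proof of Proposition~\ref{prop:e1timesdelta2} are needed; only the number of monomials appearing before cancellation is essentially larger.
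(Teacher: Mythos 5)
Your proposal is correct and follows essentially the same route as the paper: the recursion of equation~\ref{eq:high-diag-comp} with $k=3$, vanishing of the $(1,2)$-twisted Alexander--Whitney piece under $\Phi_3$, and evaluation of $\Phi_3 f_2(e_1\otimes\partial\Delta^3)$ via proposition~\ref{prop:e1timesdelta2} and naturality. Your observation that only the face $F_3\Delta^3=[0,1,2]$ can contribute to the second summand --- because the leftmost factors arising from $F_0,F_1,F_2$ all contain the terminal vertex $3$ and are killed by $\varphi_3$ --- is a clean a priori explanation of what the paper verifies by listing all twelve monomials and noting that most of them die.
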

\begin{proof}
As before, $\Delta^{3}=[0,1,2,3]$, and we have 
\begin{align*}
f_{2}(e_{1}\otimes\Delta^{3}) & =\Phi_{3}(f_{2}(\partial e_{1}\otimes\Delta^{3})-\Phi_{3}f_{2}(e_{1}\otimes\partial\Delta^{3})\\
 & =-\Phi_{3}(f_{2}((1,2)\cdot[\,]\otimes\Delta^{3})+\Phi_{3}(f_{2}([\,]\otimes\Delta^{3})-\Phi_{3}f_{2}(e_{1}\otimes\partial\Delta^{3})
\end{align*}
and 
\[
\Phi_{3}(f_{2}((1,2)\cdot[\,]\otimes\Delta^{3})=0
\]
 We also conclude \foreignlanguage{english}{
\begin{align*}
\Phi_{3}(f_{2}([\,]\otimes\Delta^{3})= & [0,3]\otimes\Delta^{3}-[0,1,3]\otimes[1,2,3]\\
 & +\Delta^{3}\otimes[2,3]
\end{align*}
Now
\[
\partial\Delta^{3}=[1,2,3]-[0,2,3]+[0,1,3]-[0,1,2]
\]
and equation~\ref{eq:e1timesdelta2} implies that
\begin{align*}
f_{2}(e_{1}\otimes\partial\Delta^{3}) & =[1,2,3]\otimes[2,3]-[1,3]\otimes[1,2,3]\\
 & -[1,2,3]\otimes[1,2]\\
 & -[0,2,3]\otimes[2,3]+[0,3]\otimes[0,2,3]\\
 & +[0,2,3]\otimes[0,2]\\
 & +[0,1,3]\otimes[1,3]-[0,3]\otimes[0,1,3]\\
 & -[0,1,3]\otimes[0,1]\\
 & -[0,1,2]\otimes[1,2]+[0,2]\otimes[0,1,2]\\
 & +[0,1,2]\otimes[0,1]
\end{align*}
\emph{Most} of these terms die when one applies $\Phi_{3}$:
\begin{align*}
\Phi_{3}f_{2}(e_{1}\otimes\partial\Delta^{3}) & =\Delta^{3}\otimes[1,2]+[0,2,3]\otimes[0,1,2]\\
 & -\Delta^{3}\otimes[0,1]
\end{align*}
We conclude that
\begin{align*}
f_{2}(e_{1}\otimes\Delta^{3}) & =[0,3]\otimes\Delta^{3}-[0,1,3]\otimes[1,2,3]\\
 & +\Delta^{3}\otimes[2,3]-\Delta^{3}\otimes[1,2]\\
 & -[0,2,3]\otimes[0,1,2]-\Delta^{3}\otimes[0,1]
\end{align*}
which implies equation~\ref{eq:e1timesdelta3}. }
\end{proof}
With this in mind, note that images of simplices in $\cfn *$ have
an interesting property:
\begin{prop}
\label{pro:simplicespropertyS}Let $X$ be a simplicial set with $C=\cfn X$
and with coalgebra structure 
\[
f_{n}:RS_{n}\otimes\cfn X\to\cfn X^{\otimes n}
\]
and suppose $RS_{2}$ is generated in dimension $n$ by $e_{n}=\underbrace{[(1,2)|\cdots|(1,2)]}_{n\text{ terms}}$.
If $x\in C$ is the image of a $k$-simplex, then
\[
f_{2}(e_{k}\otimes x)=\xi_{k}\cdot x\otimes x
\]
where $\xi_{k}=(-1)^{k(k-1)/2}$.\end{prop}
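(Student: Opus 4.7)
The plan is to induct on $k = \dim x$. By the naturality of the $\s$-coalgebra structure asserted in Theorem~\ref{thm:ns-construct}, every $k$-simplex of $X$ is the image of the standard $\Delta^k$ under an order-preserving vertex map, so it suffices to verify the formula for $x = \Delta^k$. The base case $k=0$ is immediate: the initial condition of the construction in Theorem~\ref{thm:ns-construct} gives $f_2([\,]\otimes[0]) = [0]\otimes[0]$, matching $\xi_0 = 1$.

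The inductive step turns on two dimensional vanishing observations. The complex $\cfn{\Delta^k}^{\otimes 2}$ is concentrated in degrees $\le 2k$ with a one-dimensional top component generated by $\Delta^k \otimes \Delta^k$; so $f_2(e_k \otimes \Delta^k)$, which has total degree $2k$, must equal $c_k\,\Delta^k \otimes \Delta^k$ for some integer $c_k$, reducing the task to showing $c_k = \xi_k$. Moreover, for each face inclusion $\phi_i\colon \Delta^{k-1}\to\Delta^k$, naturality identifies $f_2(e_k\otimes F_i\Delta^k)$ with $\cfn{\phi_i}^{\otimes 2}(f_2(e_k\otimes\Delta^{k-1}))$; but $f_2(e_k\otimes\Delta^{k-1})$ sits in degree $2k-1$ in $\cfn{\Delta^{k-1}}^{\otimes 2}$, a complex whose top degree is $2(k-1) < 2k-1$, so this element vanishes. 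Consequently $f_2(e_k\otimes\partial\Delta^k)=0$, and the chain-map identity for $f_2$ collapses to
\[
c_k\,\partial(\Delta^k\otimes\Delta^k) \;=\; f_2(\partial e_k\otimes\Delta^k) \;=\; \bigl((1,2)+(-1)^k\bigr)\,f_2(e_{k-1}\otimes\Delta^k),
\]
using the normalized bar-resolution differential $\partial e_k = ((1,2)+(-1)^k)\,e_{k-1}$.

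The main obstacle is to pin down $f_2(e_{k-1}\otimes\Delta^k)$: this is not covered directly by the inductive hypothesis, which concerns $f_2(e_{k-1}\otimes y)$ only for $(k-1)$-simplices $y$. I would address this by a subsidiary analysis. The element $f_2(e_{k-1}\otimes\Delta^k)$ sits in degree $2k-1$, so a priori it is a linear combination of summands of the form $\Delta^k\otimes F_i\Delta^k$ and $F_j\Delta^k\otimes\Delta^k$; Invariant Condition~\ref{cond:invariant-condition} rules out the summand $F_k\Delta^k\otimes\Delta^k$ because $F_k\Delta^k=[0,\dots,k-1]$ does not end in vertex $k$. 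Applying the chain-map identity to $f_2(e_{k-1}\otimes\Delta^k)$ -- invoking the inductive hypothesis on each face to get $f_2(e_{k-1}\otimes F_i\Delta^k)=\xi_{k-1}\,F_i\Delta^k\otimes F_i\Delta^k$ -- together with the uniqueness assertion of Lemma~\ref{lem:cartanconstruction}, determines the remaining coefficients of $f_2(e_{k-1}\otimes\Delta^k)$. Substituting back into the displayed identity and matching coefficients of $F_i\Delta^k\otimes\Delta^k$ and $\Delta^k\otimes F_i\Delta^k$ forces $c_k = (-1)^{k-1}\xi_{k-1}$, which equals $\xi_k$ since $\xi_k/\xi_{k-1}=(-1)^{k-1}$, completing the induction.
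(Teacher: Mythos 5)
Your reduction to $x=\Delta^k$, the degree argument forcing $f_2(e_k\otimes\Delta^k)=c_k\,\Delta^k\otimes\Delta^k$, and the vanishing $f_2(e_k\otimes\partial\Delta^k)=0$ are all correct and match the skeleton of the paper's argument (equation~\ref{eq:big-diag-condition}). The gap is in your final step. Writing $f_2(e_{k-1}\otimes\Delta^k)=\sum_i a_i\,\Delta^k\otimes F_i\Delta^k+\sum_{j\ne k} b_j\,F_j\Delta^k\otimes\Delta^k$ and matching coefficients in $c_k\,\partial(\Delta^k\otimes\Delta^k)=\bigl(T+(-1)^k\bigr)f_2(e_{k-1}\otimes\Delta^k)$, the equations coming from the left and right tensor slots are proportional (the Koszul sign $(-1)^{k(k-1)}$ is $+1$), so all you extract is $a_i+(-1)^k b_i=(-1)^i c_k$; in particular $a_k=(-1)^k c_k$, and you still need the actual value of $a_k$. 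Your proposed subsidiary analysis does not supply it: the chain-map identity for $e_{k-1}$ reads $\partial f_2(e_{k-1}\otimes\Delta^k)=f_2(\partial e_{k-1}\otimes\Delta^k)+(-1)^{k-1}f_2(e_{k-1}\otimes\partial\Delta^k)$, and its first term introduces the new unknown $f_2(e_{k-2}\otimes\Delta^k)$, cascading down to $e_0$. The uniqueness clause of Lemma~\ref{lem:cartanconstruction} guarantees that this whole system has exactly one solution satisfying Condition~\ref{cond:invariant-condition}, but citing uniqueness does not produce the coefficient; it only tells you that a computation, once done, gives the unique answer. (For $k=3$ the element you would need, $f_2(e_2\otimes\Delta^3)$, already has several terms and is not determined by the single relation you write down.)

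The paper closes exactly this hole by using the \emph{explicit} contracting homotopy $\Phi_k=\varphi_k\otimes1+\iota_k\circ\epsilon\otimes\varphi_k$ that solves the recursion, via equation~\ref{eq:hdiag-comp}: $f_2(e_j\otimes\sigma)=\Phi_k f_2(\partial e_j\otimes\sigma)+(-1)^j\Phi_k f_2(e_j\otimes\partial\sigma)$. Iterating this once more and using $\varphi_k^2=0$ and $\varphi_k\circ\iota_k\circ\epsilon=0$ (together with the fact that all lower stages already satisfy the Invariant Condition, so their leftmost factors lie in $\img\varphi_k$ and are annihilated by another $\varphi_k$) collapses everything to
\[
f_2(e_k\otimes\sigma)=-(\varphi_k\otimes\varphi_k)\cdot T\cdot f_2(e_{k-1}\otimes\partial\sigma),
\]
to which the inductive hypothesis applies term by term; only the face $[0,\dots,k-1]$ survives $\varphi_k\otimes\varphi_k$, and the Koszul signs yield $\xi_k=(-1)^{k-1}\xi_{k-1}$. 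To repair your proof you would need to either import this explicit formula or otherwise actually compute the $\Delta^k\otimes F_k\Delta^k$ component of $f_2(e_{k-1}\otimes\Delta^k)$; the chain-map identity alone, at a single stage, is not enough.
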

\begin{rem*}
This is just a chain-level statement that the Steenrod operation $\operatorname{Sq}^{0}$
acts trivially on mod-$2$ cohomology. A weaker form of this result
appeared in \cite{Davis:mco}.\end{rem*}
\begin{proof}
Recall that $(\rs 2)_{n}=\ints[\ints_{2}]$ generated by $ $$e_{n}=[\underbrace{(1,2)|\cdots|(1,2)}_{n\text{ factors}}]$.
Let $T$ be the generator of $\ints_{2}$ --- acting on $C\otimes C$
by swapping the copies of $C$.

We assume that $f_{2}(e_{i}\otimes C(\Delta^{j}))\subset C(\Delta^{j})\otimes C(\Delta^{j})$
so that 
\begin{equation}
i>j\implies f_{2}(e_{i}\otimes C(\Delta^{j}))=0\label{eq:big-diag-condition}
\end{equation}

\end{proof}
As in section~4 of \cite{Smith:1994}, if $e_{0}=[\,]\in\rs 2$ is
the $0$-dimensional generator, we define
\[
f_{2}:\rs 2\otimes C\to C\otimes C
\]
 inductively by
\begin{eqnarray}
f_{2}(e_{0}\otimes[i]) & = & [i]\otimes[i]\nonumber \\
f_{2}(e_{0}\otimes[0,\dots,k]) & = & \sum_{i=0}^{k}[0,\dots,i]\otimes[i,\dots,k]\label{eq:big-diag1}
\end{eqnarray}
Let $\sigma=\Delta^{k}$ and inductively define
\begin{align}
f_{2}(e_{k}\otimes\sigma) & =\Phi_{k}(f_{2}(\partial e_{k}\otimes\sigma)+(-1)^{k}\Phi_{k}f_{2}(e_{k}\otimes\partial\sigma)\nonumber \\
 & =\Phi_{k}(f_{2}(\partial e_{k}\otimes\sigma)\label{eq:hdiag-comp}
\end{align}
because of equation~\ref{eq:big-diag-condition}. 
\begin{proof}
Expanding $\Phi_{k}$, we get
\begin{align}
f_{2}(e_{k}\otimes\sigma) & =(\varphi_{k}\otimes1)(f_{2}(\partial e_{k}\otimes\sigma))+(i\circ\epsilon\otimes\varphi_{k})f_{2}(\partial e_{k}\otimes\sigma)\nonumber \\
 & =(\varphi_{k}\otimes1)(f_{2}(\partial e_{k}\otimes\sigma))\label{eq:big-diag2}
\end{align}
 because $\varphi_{k}^{2}=0$ and $\varphi_{k}\circ i\circ\epsilon=0$. 

Noting that $\partial e_{k}=(1+(-1)^{k}T)e_{k-1}\in\rs 2$, we get
\begin{align*}
f_{2}(e_{k}\otimes\sigma) & =(\varphi_{k}\otimes1)(f_{2}(e_{k-1}\otimes\sigma)+(-1)^{k}(\varphi_{k}\otimes1)\cdot T\cdot f_{2}(e_{k-1}\otimes\sigma)\\
 & =(-1)^{k}(\varphi_{k}\otimes1)\cdot T\cdot f_{2}(e_{k-1}\otimes\sigma)
\end{align*}
again, because $\varphi_{k}^{2}=0$ and $\varphi_{k}\circ\iota_{k}\circ\epsilon=0$.
We continue, using equation~\ref{eq:big-diag2} to compute $f(e_{k-1}\otimes\sigma)$:
\begin{align*}
f_{2}(e_{k}\otimes\sigma)= & (-1)^{k}(\varphi_{k}\otimes1)\cdot T\cdot f_{2}(e_{k-1}\otimes\sigma)\\
= & (-1)^{k}(\varphi_{k}\otimes1)\cdot T\cdot(\varphi_{k}\otimes1)\biggl(f_{2}(\partial e_{k-1}\otimes\sigma)\\
 & +(-1)^{k-1}f_{2}(e_{k-1}\otimes\partial\sigma)\biggr)\\
= & (-1)^{k}\varphi_{k}\otimes\varphi_{k}\cdot T\cdot\biggl(f_{2}(\partial e_{k-1}\otimes\sigma)\\
 & +(-1)^{k-1}f_{2}(e_{k-1}\otimes\partial\sigma)\biggr)
\end{align*}
If $k-1=0$, then the left term vanishes. If $k-1=1$ so $\partial e_{k-1}$
is $0$-dimensional then equation~\ref{eq:big-diag1} gives $f(\partial e_{1}\otimes\sigma)$
and this vanishes when plugged into $\varphi_{k}\otimes\varphi_{k}$.
If $k-1>1$, then $f_{2}(\partial e_{k-1}\otimes\sigma)$ is in the
image of $\varphi_{k}$, so it vanishes when plugged into $\varphi_{k}\otimes\varphi_{k}$.

In \emph{all} cases, we can write
\begin{align*}
f_{2}(e_{k}\otimes\sigma) & =(-1)^{k}\varphi_{k}\otimes\varphi_{k}\cdot T\cdot(-1)^{k-1}f_{2}(e_{k-1}\otimes\partial\sigma)\\
 & =-\varphi_{k}\otimes\varphi_{k}\cdot T\cdot f_{2}(e_{k-1}\otimes\partial\sigma)
\end{align*}
If $f_{2}(e_{k-1}\otimes\Delta^{k-1})=\xi_{k-1}\Delta^{k-1}\otimes\Delta^{k-1}$
(the inductive hypothesis), then 
\begin{multline*}
f_{2}(e_{k-1}\otimes\partial\sigma)=\\
\sum_{i=0}^{k}\xi_{k-1}\cdot(-1)^{i}[0,\dots,i-1,i+1,\dots k]\otimes[0,\dots,i-1,i+1,\dots k]
\end{multline*}
and the only term that does not get annihilated by $\varphi_{k}\otimes\varphi_{k}$
is 
\[
(-1)^{k}[0,\dots,k-1]\otimes[0,\dots,k-1]
\]
 (see equation~\foreignlanguage{english}{\ref{eq:simplex-contracting-cochain}}).
We get
\begin{align*}
f_{2}(e_{k}\otimes\sigma) & =\xi_{k-1}\cdot\varphi_{k}\otimes\varphi_{k}\cdot T\cdot(-1)^{k-1}[0,\dots,k-1]\otimes[0,\dots,k-1]\\
 & =\xi_{k-1}\cdot\varphi_{k}\otimes\varphi_{k}(-1)^{(k-1)^{2}+k-1}[0,\dots,k-1]\otimes[0,\dots,k-1]\\
 & =\xi_{k-1}\cdot(-1)^{(k-1)^{2}+2(k-1)}\varphi[0,\dots,k-1]\otimes\varphi[0,\dots,k-1]\\
 & =\xi_{k-1}\cdot(-1)^{k-1}[0,\dots,k]\otimes[0,\dots,k]\\
 & =\xi_{k}\cdot[0,\dots,k]\otimes[0,\dots,k]
\end{align*}
where the sign-changes are due to the Koszul Convention. We conclude
that $\xi_{k}=(-1)^{k-1}\xi_{k-1}$.
\end{proof}

\section{Proof of lemma~\ref{lem:diagonals-linearly-independent}}
\begin{lem}
\label{lem:diagonals-linearly-independent}Let $C$ be a free abelian
group, let 
\[
\hat{C}=\ints\oplus\prod_{i=1}^{\infty}C^{\otimes i}
\]

Let $e:C\to\hat{C}$ be the function that sends $c\in C$ to
\[
(1,c,c\otimes c,c\otimes c\otimes c,\dots)\in\hat{C}
\]
For any integer $t>1$ and any set $\{c_{1},\dots,c_{t}\}\in C$ of
distinct, nonzero elements, the elements 
\[
\{e(c_{1}),\dots,e(c_{t})\}\in\rats\otimes_{\ints}\hat{C}
\]
are linearly independent over $\rats$. It follows that $e$ defines
an injective function
\[
\bar{e}:\ints[C]\to\hat{C}
\]
\end{lem}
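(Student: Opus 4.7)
The plan is to reduce the linear-independence statement to the classical fact that the Vandermonde matrix on distinct scalars is invertible. The key device is a single $\mathbb{Z}$-linear functional on $C$ that separates the given elements, followed by its tensor powers, which convert the ``diagonal'' vectors $e(c_i)$ into geometric sequences.

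First, I would choose $\phi\colon C\to\mathbb{Z}$ such that $\phi(c_1),\dots,\phi(c_t)$ are pairwise distinct and all nonzero. This is possible because $C$ is free abelian: the finitely many nonzero elements $c_i$ and differences $c_i-c_j$ each have some nonzero coordinate in any chosen basis, and any sufficiently generic integer linear combination of the dual basis elements (for instance $(1,N,N^2,\dots)$ for large $N$) simultaneously avoids the vanishing locus of each of these finitely many nonzero linear forms. Form the tensor powers $\phi^{\otimes k}\colon C^{\otimes k}\to\mathbb{Z}$ and assemble them into a $\mathbb{Z}$-linear map
\[
\Phi\colon \hat{C}\longrightarrow \mathbb{Z}\times\prod_{k=1}^{\infty}\mathbb{Z},\qquad (a_0,a_1,a_2,\dots)\longmapsto (a_0,\phi(a_1),\phi^{\otimes 2}(a_2),\dots).
\]
Because $\phi^{\otimes k}(c^{\otimes k})=\phi(c)^k$, we obtain $\Phi(e(c_i))=(1,\phi(c_i),\phi(c_i)^2,\dots)$.

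Next, suppose $\sum_{i=1}^{t}\lambda_i e(c_i)=0$ in $\mathbb{Q}\otimes_{\mathbb{Z}}\hat{C}$ with $\lambda_i\in\mathbb{Q}$. Applying the rationalization of $\Phi$ and reading off coordinates $k=0,1,\dots,t-1$ gives the linear system
\[
\sum_{i=1}^{t}\lambda_i\,\phi(c_i)^{k}=0,\qquad k=0,1,\dots,t-1.
\]
Since the $\phi(c_i)$ are pairwise distinct rationals, the $t\times t$ Vandermonde matrix $[\phi(c_i)^{k}]$ is invertible, forcing $\lambda_1=\cdots=\lambda_t=0$. This yields the linear independence of $\{e(c_1),\dots,e(c_t)\}$ over $\mathbb{Q}$.

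Finally, for the injectivity of $\bar e\colon\mathbb{Z}[C]\to\hat{C}$, suppose $\bar e\bigl(\lambda_0[0]+\sum_{i=1}^{t}\lambda_i[c_i]\bigr)=0$ with the $c_i$ distinct nonzero. Since $e(0)=(1,0,0,\dots)$, applying $\Phi$ and examining coordinates $k=1,\dots,t$ gives the same Vandermonde system as above (the $e(0)$ term contributes nothing in positive degree), so $\lambda_1=\cdots=\lambda_t=0$; then the $0$-th coordinate forces $\lambda_0=0$. The only genuinely non-mechanical step is the construction of the separating functional $\phi$, but this is standard for free abelian groups; after that, the Vandermonde argument is immediate.
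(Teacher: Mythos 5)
Your proof is correct, and it rests on the same engine as the paper's: reduce linear independence of the $e(c_i)$ to the invertibility of a Vandermonde matrix built from the powers $c_i^{\otimes k}$. The one place where you genuinely diverge is the linearization step. The paper maps $C\otimes_{\ints}\rats$ injectively into the \emph{linear} part of a polynomial ring $\rats[X_1,X_2,\dots]$, extends multiplicatively to tensor powers, and then invokes the Vandermonde determinant $\prod_{i<j}(f(c_i)-f(c_j))$ over that integral domain, which is nonzero because the $f(c_i)$ are distinct. You instead precompose with a single separating functional $\phi\colon C\to\ints$ (in effect evaluating the paper's indeterminates at a generic point $X_\alpha\mapsto N^{\alpha-1}$), which collapses everything to a scalar Vandermonde on the distinct integers $\phi(c_i)$. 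Your construction of $\phi$ is sound: the finitely many nonzero elements $c_i$ and $c_i-c_j$ involve only finitely many basis vectors, and each imposes the nonvanishing of a nonzero polynomial in $N$, so a large $N$ works. What your route buys is a slightly more elementary endgame (Vandermonde over $\rats$ rather than over a polynomial ring, and no need for the truncation map $r_t$ or the degree-separation argument); what it costs is the extra, though standard, step of producing the separating functional. Also note that the statement's final claim concerns injectivity of $\bar e$ on all of $\ints[C]$, where the identity element of $C$ need not be excluded from a support set; your handling of the $e(0)$ term via the degree-$0$ coordinate correctly covers that case, which the paper's phrasing leaves implicit.
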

\begin{proof}
We will construct a vector-space morphism
\begin{equation}
f:\rats\otimes_{\ints}\hat{C}\to V\label{eq:diagonals-linearly-independent1}
\end{equation}
such that the images, $\{f(e(c_{i}))\}$, are linearly independent.
We begin with the ``truncation morphism''
\[
r_{t}:\hat{C}\to\ints\oplus\bigoplus_{i=1}^{t-1}C^{\otimes i}=\hat{C}_{t-1}
\]
which maps $C^{\otimes1}$ isomorphically. If $\{b_{i}\}$ is a $\ints$-basis
for $C$, we define a vector-space morphism 
\[
g:\hat{C}_{t-1}\otimes_{\ints}\rats\to\rats[X_{1},X_{2},\dots]
\]
by setting
\[
g(c)=\sum_{\alpha}z_{\alpha}X_{\alpha}
\]
where $c=\sum_{\alpha}z_{\alpha}b_{\alpha}\in C\otimes_{\ints}\rats$,
and extend this to $\hat{C}_{t-1}\otimes_{\ints}\rats$ via 
\[
g(c_{1}\otimes\cdots\otimes c_{j})=g(c_{1})\cdots g(c_{j})\in\rats[X_{1},X_{2},\dots]
\]
The map in equation~\ref{eq:diagonals-linearly-independent1} is
just the composite
\[
\hat{C}\otimes_{\ints}\rats\xrightarrow{r_{t-1}\otimes1}\hat{C}_{t-1}\otimes_{\ints}\rats\xrightarrow{g}\rats[X_{1},X_{2},\dots]
\]
It is not hard to see that 
\[
p_{i}=f(e(c_{i}))=1+f(c_{i})+\cdots+f(c_{i})^{t-1}\in\rats[X_{1},X_{2},\dots]
\]
 for $i=1,\dots,t$. Since the $f(c_{i})$ are \emph{linear} in the
indeterminates $X_{i}$, the degree-$j$ component (in the indeterminates)
of $f(e(c_{i}))$ is precisely $f(c_{i})^{j}$. It follows that a
linear dependence-relation
\[
\sum_{i=1}^{t}\alpha_{i}\cdot p_{i}=0
\]
with $\alpha_{i}\in\rats$, holds if and only if
\[
\sum_{i=1}^{t}\alpha_{i}\cdot f(c_{i})^{j}=0
\]
 for all $j=0,\dots,t-1$. This is equivalent to $\det M=0$, where
\[
M=\left[\begin{array}{cccc}
1 & 1 & \cdots & 1\\
f(c_{1}) & f(c_{2}) & \cdots & f(c_{t})\\
\vdots & \vdots & \ddots & \vdots\\
f(c_{1})^{t-1} & f(c_{2})^{t-1} & \cdots & f(c_{t})^{t-1}
\end{array}\right]
\]
 Since $M$ is the transpose of the Vandermonde matrix, we get
\[
\det M=\prod_{1\le i<j\le t}(f(c_{i})-f(c_{j}))
\]
Since $f|C\otimes_{\ints}\rats\subset\hat{C}\otimes_{\ints}\rats$
is \emph{injective,} it follows that this \emph{only} vanishes if
there exist $i$ and $j$ with $i\ne j$ and $c_{i}=c_{j}$. The second
conclusion follows.
\end{proof}
\bibliographystyle{amsplain}

\providecommand{\bysame}{\leavevmode\hbox to3em{\hrulefill}\thinspace}
\providecommand{\MR}{\relax\ifhmode\unskip\space\fi MR }
\providecommand{\MRhref}[2]{%
  \href{http://www.ams.org/mathscinet-getitem?mr=#1}{#2}
}
\providecommand{\href}[2]{#2}


    \end{document}